\documentclass[12pt,a4paper]{amsart}

\usepackage[utf8]{inputenc}
\usepackage[english]{babel}

\usepackage{latexsym}
\usepackage{amsfonts}
\usepackage{amsmath}
\usepackage{amssymb}
\usepackage{graphicx}
\usepackage{color}
\usepackage{enumitem}

\usepackage{amsthm}
\usepackage{pstricks}
\usepackage{csquotes}
\usepackage{fullpage}
\usepackage{url}
\usepackage[ocgcolorlinks, linkcolor=red]{hyperref}

\usepackage{algorithm}
\usepackage{algcompatible}

\newcommand{\R}{\mathbb{R}}
\newcommand{\N}{\mathbb{N}}

\newcommand{\C}{\mathbb{C}}

\newcommand{\ov}[1]{\overline{#1}}

\newcommand\supp{\operatorname{supp}} 
 
\newcommand\spec{\operatorname{Spec}}

\newcommand\p{\partial}

\newcommand\osupp{\operatorname{osupp}}

\newtheorem{thm}{Theorem}[section]

\newtheorem{lem}[thm]{Lemma}

\theoremstyle{definition}

\numberwithin{equation}{section}

\title{Shape reconstruction of inclusions based on noisy data via monotonicity methods for the time harmonic elastic wave equation}

\author{Sarah Eberle-Blick}

\thanks{E-Mail: eberle@math.uni-frankfurt.de, Institute of Mathematics, Goethe-University Frankfurt, Frankfurt am Main, Germany}

\date{}

\begin{document}

\maketitle

\begin{abstract}
In this paper, we extend our research concerning the standard and linearized monotonicity methods for the inverse problem of the time harmonic elastic wave equation and introduce the modification of these methods for noisy data. In more detail, the methods must provide consistent results when using noisy data in order to be able to perform simulations with real world data, e.g., laboratory data. We therefore consider the disturbed Neumann-to-Dirichlet operator and modify the bound of the eigenvalues in the monotonicity tests for reconstructing unknown inclusions with noisy data. In doing so, we show that there exists a noise level $\delta_0$ so that the inclusions are detected and their shape is reconstructed for all noise levels $\delta<\delta_0$. Finally, we present some numerical simulations based on noisy data. 
\end{abstract}

\noindent
{\bf Keywords:} inverse problems, time harmonic elastic wave equation, inclusion detection, noisy data, monotonicity methods

\section{Introduction} 
\noindent
Inverse elasticity problems are used in a wide variety of areas and include medical (see e.g. \cite{BG04}) and geophysical (described in \cite{WACSMCCFSZ03}) applications, but especially the reconstruction of inclusions in material analysis (as shown in \cite{ABGKLW15} and the references therein). 
In this paper we focus on the shape reconstruction based on monotonicity methods for the time harmonic wave equation developed in \cite{EP24} and \cite{EP24a} and extend these methods to noisy data which is of great importance if we deal with data from real measurements, since there are always measurement errors.
\\
\\
Similar as in our previous papers \cite{EP24} and \cite{EP24a}, we introduce the direct problem of the time harmonic problem as follows: 
Let $\Omega \subset \R^3$ be an isotropic elastic body, where the properties in its linear regime are described via the Lamé parameters $\lambda$ and $\mu$. By applying time harmonic oscillations on $\Omega$, we are led to the Navier equation. In more detail, this equation gives the displacement field  $u : \Omega \to \R^3$, $u \in H^1(\Omega)^3$ of the solid body $\Omega$, due to disturbances. We specifically deal with a material body $\Omega \subset \R^3$ with a $C^{1,1}$-boundary and introduce the 
Navier equation by means of the following boundary value problem
\begin{align}  \label{eq_bvp1}
\begin{cases}
\nabla \cdot (\C\,  \hat \nabla u )  + \omega^2\rho u &=\,\, 0, \quad\text{in}\,\,\Omega,\\
\hspace{1.8cm}(\C\,  \hat \nabla u ) \nu  &=\,\, g, \quad\text{on}\,\,\Gamma_N, \\
\hphantom{\hspace{1.8cm}(\C\,  \hat \nabla u ) } u  &=\,\, 0, \quad\text{on}\,\,\Gamma_D, 
\end{cases}
\end{align}
where $\Gamma_N$ and $\Gamma_D$ are such that
$$
\Gamma_N, \Gamma_D \subset \p\Omega \text{ are open }, \qquad \Gamma_N \neq \emptyset, \qquad
\p \Omega = \ov{\Gamma}_N \cup \ov{\Gamma}_D,
$$
and where $\hat \nabla u  = \frac{1}{2}(\nabla u + (\nabla u)^T)$ is the symmetrization of the Jacobian
or the strain tensor, 
and $\C$ is the 4th order tensor defined by
\begin{equation} \label{eq_Cdef}
\begin{aligned}
(\C A)_{ij} = 2\mu A_{ij} + \lambda \operatorname{tr}(A) \delta_{ij},
\quad \text{ where } A \in \R^{3 \times 3},
\end{aligned}
\end{equation}
and $\delta_{ij}$ is the Kronecker delta. The Lamé parameters are specified via the scalar functions 
$\lambda,\mu \in L^\infty_+(\Omega)$, which determine
the elastic  properties of the material, $\rho \in L^\infty_+(\Omega)$ is the density of the material,
and $\omega \neq 0$ the angular frequency of the oscillation, and $\nu$
is the outward pointing unit normal vector to the boundary $\p \Omega$. 
The vector field $g \in L^2(\Gamma_N)^3$ can be understood as the source
of the oscillation, and since $\C \,\hat \nabla u$ equals by Hooke's law to the Cauchy stress tensor, we see 
that the boundary condition $g$ specifies the traction on the surface $\p \Omega$.
\\
\\
We assume that $\omega \in \R$ is not a resonance frequency, which is a common assumption, since then the problem \eqref{eq_bvp1} has a unique solution for a given boundary condition $g\in L^2(\Gamma_N)^3$. Furthermore, we define the Neumann-to-Dirichlet map
$\Lambda:L^2(\Gamma_N)^3 \to L^2(\Gamma_N)^3$, as
\begin{equation} \label{eq_ND_map}
\begin{aligned}
\Lambda: g \mapsto u|_{\Gamma_N}. 
\end{aligned}
\end{equation}
Thus, $\Lambda$ maps the traction to the displacement $u|_{\Gamma_N}$ on the boundary.
\\
\\
Our aim is to formulate the standard and linearized monotonicity tests for noisy data. This problem can be handled by modifying the bound of the eigenvalues in the monotonicity tests to obtain the same reconstruction as without noise, as long as the noise does not exceeds the maximal noise level $\delta_0$. Please note, that the maximal noise level can be increased by using more boundary loads as shown in \cite{EH23} for the static case. As can be seen in Figure \ref{standard_noise_0} and \ref{linearized_noise_0}, noiseless reconstruction results in the correct detection of the inclusions. Both monotonicity methods can handle small amounts of noise with the standard monotonicity method being more robust regarding noise than the linearized method. Contrary, the standard method is much slower. Those results are similar to the stationary case (see \cite{EH21}).
\\
\\
\noindent
Before, we extend the monotonicity methods from \cite{EP24} and \cite{EP24a} for noisy data, we give a short overview of the methods applied so far:
\\
\\
Most of the methods are iterative (see e.g. \cite{SFHC14}, \cite{B18}, \cite{BYZ19}) and based on regularization techniques for minimization problems but there are also non-iterative methods such as linear sampling methods and factorization methods. In more detail, \cite{LWWZ16} develop a continuation method with respect to the frequency for the inverse problem in 2D to reconstruct inclusions. The reconstruction of constitutive parameters in isotropic linear elasticity from noisy full-field measurements (measurements inside the whole domain under consideration) using a regularization technique can be found in \cite{BBIM04}. In \cite{AB19}, an error in constitutive equation approach (ECE) to derive the Lamé parameters for time harmonic elasticity is examined. The method is based on the minimization of an energy-based cost functional. The authors of \cite{SFHC14} recover piecewise constant Lamé parameters for an unknown fixed density model. They provide a Lipschitz stability result for the inverse problem as well as a multi-level inversion scheme. A reconstruction of the boundary of a scatterer in a homogeneous medium from far-field data is considered in \cite{JL19} and \cite{LLS19}. To this end, they develop a sampling method in 2D, whereas \cite{HMSY20} propose a factorization method in 2D and 3D. In \cite{HKS12}, \cite{HLZ13} and \cite{EH19} a factorization method is described as well. Here, the focus lies on the inverse elastic scattering problem from periodic rigid structures in 2D. In \cite{CKAGK06} inverse elastics scattering problems are considered by means of the factorization method, too. 
In addition, the monotonicity method was adopted to the elasto-oscillatory inverse problem by the author in \cite{EP24} and \cite{EP24a}.
\\
\\
For the three-dimensional case, uniqueness theorems for the direct and inverse obstacle scattering problem are introduced in \cite{HH93}. A uniqueness theorem in inverse scattering of elastic waves based on the far-field is shown in \cite{H93} for piecewise constant density and constant Lamé parameters. A further uniqueness result can be found in \cite{JMRY03}, where only $\mu$ is considered, i.e., $\lambda=0$. In addition, the background shear modulus $\mu_0$ and $\rho=\rho_0$ are assumed to be constant. A Lipschitz type stability estimate assuming that the density is piecewise constant is shown in \cite{BHQ13} for constant Lamé parameters and in \cite{BHFVZ17} for piecewise constant Lamé parameters. 
Further on, we want to mention, the work \cite{EP24}, where the localized potentials were analyzed and their existence was proven by the Runge approximation.
\\
\\
The basic idea of the monotonicity method has been first worked out and numerically tested in \cite{TR02} and \cite{Ta06}. In addition, the monotonicity methods have already been successfully implemented in theory and practice for electrical impedance tomography (see, e.g., \cite{HU13}, \cite{HM16}), the Helmholtz equation (see \cite{HPS19a} and \cite{HPS19b}) as well as stationary (see \cite{EH21}, \cite{EH22}, \cite{EH23} and \cite{EHMR21} for Lipschitz stability) and time harmonic elasticity (see \cite{EP24} and \cite{EP24a}). 
\\
\\
In this paper we focus on the examination of the monotonicity methods for noisy data and the corresponding numerical simualtions, which are required if we deal, e.g., with real world data of a lab experiment (see \cite{EM21} for the stationary case).
\\
\\
The paper is organized as follows: We start with introduction of the inverse problem as well as an overview of the methods applied so far for solving this problem. In Section \ref{sec_defs}, we state the required definitions and summarize the results concerning the standard and linearized monotonicity methods in Section \ref{summary}. After that, we come to the main part of this paper, where we extend the monotonicity methods to noisy data and prove the resulting monotonicty tests. Finally, we present several numerical examples based on noisy data.

\section{Definitions and  other preliminaries} \label{sec_defs}

\noindent
In order to provide the corresponding background, we summarize the requiured definitions and preliminaries. We want to remark that the notations are to a large extent the same notations as in \cite{EP24}.
The following definitions related to function spaces are used through out the paper. 
The space $H^k(\Omega)$ denotes the $L^2(\Omega)$ based Sobolev space with $k$ weak derivatives.
In addition, we define
$$	
L^\infty_+(\Omega) := \big\{ f \in L^\infty(\Omega) \;:\; \operatorname{essinf}_\Omega f > 0 \big\}.
$$
We use the notation $Z^n$  for a function space $Z$
with $Z^n := Z \times \dots \times Z$, where the right hand side  contains $n$ copies of $Z$.
We define the  $L^2$-inner product by $(\cdot,\cdot)_{L^2}$, so that 
$$
(u,v)_{L^2(\Omega)^{n\times m}} := \int_\Omega u : v\,dx, \quad u,v \in L^2(\Omega)^{n \times m},\quad n,m\geq 1,
$$
where $:$ is the Frobenius inner product defined below.
For orthogonality with respect to the inner product $(\cdot,\cdot)_{L^2}$, we apply the notation $\perp$, unless otherwise stated,
so that 
$$
u \perp v \qquad \Leftrightarrow \qquad (u,v)_{L^2(\Omega)^n} = 0, \quad  \text{ when } u,v \in L^2(\Omega)^n.
$$
\noindent
For the well-posedness of problem \eqref{eq_bvp1} the bilinear form $B$ related to equation \eqref{eq_bvp1} is considered, which is given by
\begin{align}  \label{eq_weak}
B(u,v)  := -\int_\Omega 
2 \mu \hat \nabla u :\hat \nabla v + \lambda \nabla \cdot u \nabla \cdot v - \omega^2\rho u\cdot v\,dx, 
\end{align}
for all $u,v \in H^1(\Omega)^3$. The  Frobenius inner product $A:B$ is defined as
$$
A:B = \sum A_{ij}B_{ij},  \qquad A,B \in \R^{m\times n}.
$$
Note that the Euclidean norm on $\R^{m \times n}$, $m,n \in \N$, is given by
$|A| = (A:A)^{1/2}$, for $A \in \R^{m \times n}$.
We further introduce the notation
$$
L_{\lambda,\mu,\rho} u := \nabla \cdot ( \C \hat \nabla u)+ \omega^2\rho u. 
$$
We want to remark that in an isotropic medium characterized by the Lamé parameters the above 
equation simplifies to
$$
L_{\lambda,\mu,\rho} u = \nabla \cdot ( 2 \mu \hat \nabla u + \lambda (\nabla \cdot u) I ) + \omega^2\rho u.
$$
A weak solution to \eqref{eq_bvp1} is defined as a $u \in H^1(\Omega)^3$,
for which $u|_{\Gamma_D} = 0$ and 
\begin{align}  \label{eq_weak2}
B(u,v)  = - \int_{\Gamma_N} g\cdot v \,dS,
\qquad \forall v \in \{ u \in H^1(\Omega)^3 \,:\, u|_{\Gamma_D} = 0 \}.
\end{align}
\noindent
For the existence and uniqueness of a weak solution to \eqref{eq_bvp1}, when $\omega$
is not a resonance frequency see Corollary 3.4 in \cite{EP24}.
\\
\\
The existence and uniqueness of a weak solution to \eqref{eq_bvp1} implies that 
the Neumann-to-Dirichlet map $\Lambda$ given by \eqref{eq_ND_map} is well defined. $\Lambda$ is related to $B$ as  follows.
When the material parameters are regular, and $u$ solves \eqref{eq_bvp1} with $g$, and
$v$ solves \eqref{eq_bvp1} with $h$, we see by integrating by parts that
\begin{align}  \label{eq_NDmap}
B(u,v)  = -\int_{ \Gamma_N} g \cdot v \,dS = -( g, \Lambda h)_{L^2(\Gamma_N)^3}. 
\end{align}
We abbreviate the  boundary condition in \eqref{eq_bvp1} by
\begin{align*} 
\gamma_{\mathbb{C},\Gamma} u  = (\C \, \hat \nabla u ) \nu |_{\Gamma},
\end{align*}
or with $\gamma_\mathbb{\C} u $ if the boundary is clear from the context. 
Note that these notations are formal when $u \in H^1(\Omega)^3$, since we cannot in general take the trace of an 
$L^2(\Omega)^3$ function. In the low regularity case we understand the boundary condition in a weak sense.
We define $\gamma_{\mathbb{C}} u \in L^2(\Gamma_N)^3$, 
with $u \in H^1(\Omega)^3$ that solves \eqref{eq_bvp1}, as the element in $L^2(\Gamma_N)^3$, for which
\begin{align*} 
-(\gamma_{\mathbb{C}} u, \, \varphi|_{\Gamma_N}  )_{L^2(\Gamma_N)^3}
= B(u,\varphi), \qquad \forall \varphi \in H^1(\Omega)^3. 
\end{align*}

\noindent
\\
Next we give a definition of the notion of the outer support of a function or a set.
The outer support (with respect to $\p\Omega$) of a measurable function $f: \Omega \to \R$   is defined as 
$$
\operatorname{osupp} (f) := \Omega \setminus \bigcup \, \big \{  U \subset \Omega \,:\, U  
\text{ is relatively open and connected to $\p \Omega$},
f|_U \equiv 0  \big\}.
$$
For more on this see \cite{HU13}.
It will be convenient to extend this definition to sets. We define the outer support of a measurable
set $D \subset \Omega$ 
(with respect to $\p\Omega$) as
\begin{align}  \label{eq_def_osupp}
\osupp (D) := \osupp(\chi_D),
\end{align}
where $\chi_D$ is the characteristic function of the set $D$.


\section{Summary of the monotonicty methods}\label{summary}
\noindent
We give a short summary of the two monotonicity methods we deal with. We will take a look at the standard (see Subsection \ref{standard}) as well as the linearized (see Subsection \ref{sec_mono_shape}) monotonicity method with which we can reconstruct the set $\osupp(D)$, where 
\begin{align*}
D = \supp(\lambda-\lambda_0) \cup \supp(\mu-\mu_0) \cup \supp(\rho- \rho_0),
\end{align*}
as we did in \cite{EP24} and \cite{EP24a}.
\\
\\
We introduce the mixed eigenvalue problem (for comparison see \cite{EP24})
 \begin{align}  \label{eq_Neumann}
 \begin{cases}
 \nabla \cdot (\C \, \hat \nabla \varphi_k )  + \omega^2\rho\varphi_k &= \quad \sigma_k \varphi_k,\\
 \quad\quad\quad\quad\quad\quad \gamma_\mathbb{C} \varphi_k  |_{\Gamma_N} &= \quad 0, \\
 \quad\quad\quad\quad\quad\quad\quad \varphi_k  |_{\Gamma_D} &=\quad 0
 \end{cases}
 \end{align}
 \noindent
 and use the quantity  $d(\lambda,\mu,\rho)$, which we define  as
 \begin{align}  \label{eq_def_d}
 d(\lambda,\mu,\rho) := \text{ the number of $\sigma_k > 0$ in problem \eqref{eq_Neumann} counted with multiplicity.}
 \end{align}
 \noindent
 \\
We will consider inhomogeneities in the material parameters of the following type.
Let $D_1, D_2, D_3 \Subset \Omega$. 
We will now assume that $\lambda,\mu, \rho \in L_+^\infty(\Omega)$ 
are such that
\begin{equation} \label{eq_lambdaMuRho}
\begin{aligned} 
\lambda(x) &= \lambda_0 + \chi_{D_1}(x) \psi_\lambda (x), \qquad \psi_\lambda \in L^\infty(\Omega),
\quad \psi_\lambda(x) > n_1, \\
\mu(x) &= \mu_0 + \chi_{D_2}(x) \psi_\mu(x), \qquad \psi_\mu \in L^\infty(\Omega),\quad \psi_\mu(x) > n_2, \\
\rho(x) &= \rho_0 - \chi_{D_3}(x) \psi_\rho(x), \qquad \psi_\rho \in L^\infty(\Omega),\quad n_3 < \psi_\rho(x) < N_3, 
\end{aligned}
\end{equation}
where the constants $\lambda_0,\mu_0,\rho_0 > 0$ and the bounds  $n_1,n_2, n_3 > 0$ and $\rho_0 > N_3$.
The coefficients $\lambda,\mu$ and $\rho$  model inhomogeneities in an otherwise homogeneous background medium
given by the coefficients $\lambda_0,\mu_0$ and $\rho_0$. 
\\
\\
Next we define the test coefficients $\lambda^\flat,\mu^\flat$ and $\rho^\flat$. Let $B \subset \Omega$ be an open set. We let
\begin{equation} \label{eq_testCoeff}
\begin{aligned} 
\lambda^\flat(x) &= \lambda_0 + \alpha_1 \chi_{B}(x), \\
\mu^\flat(x) &= \mu_0 + \alpha_2\chi_{B}(x), \\
\rho^\flat(x) &= \rho_0 - \alpha_3\chi_{B}(x), \\
\end{aligned}
\end{equation}
where $\alpha_j >  0$ are constants and $\chi_{B}$ is the characteristic function w.r.t. the test inclusion $B$.

\subsection{Standard monotonicty test}\label{standard}
The following theorem gives the background for the standard monotonicity tests for exact data.
\begin{thm}[see \cite{EP24}]
Let $D := D_1 \cup D_2 \cup D_3$, where the sets are as in \eqref{eq_lambdaMuRho} and
$B \subset \Omega$  and $\alpha_j > 0$ be as in \eqref{eq_testCoeff}, 
and set $\alpha:=(\alpha_1,\alpha_2,\alpha_3)$.  Let $M_s\in\mathbb{R}$ be defined as 
 \begin{align*}
 M_s:=d(\lambda_0,\mu_0,\rho_0),
 \end{align*}
 \noindent
 where $d(\lambda_0,\mu_0,\rho_0)$ is the number of positive eigenvalues of $L_{\lambda_0,\mu_0,\rho_0}$ as defined in (\ref{eq_def_d}).
The following holds: \\
\begin{enumerate}
\item Assume that  $B \subset D_j$, for $j \in J$, for some $J \subset\{1,2,3\}$.  
Then for all $\alpha_j$ with $\alpha_j \leq n_j$, $j \in J$, and $\alpha_j = 0$, 
$j \notin J$, the map $\Lambda^\flat - \Lambda$ has at most $M_s$ negative eigenvalues. \\

\item If $B \not \subset \osupp(D)$, then for all $\alpha$, $|\alpha| \neq 0$, 
the map  $\Lambda^\flat - \Lambda$ has more than $M_s$ negative eigenvalues,

\end{enumerate}
where $\Lambda$ is the Neumann-to-Dirichlet map for the coefficients in \eqref{eq_lambdaMuRho}
and $\Lambda^\flat$ is the Neumann-to-Dirichlet map for the coefficients in \eqref{eq_testCoeff},
and where the eigenvalues of $\Lambda^\flat-\Lambda$ are counted with multiplicity.
\end{thm}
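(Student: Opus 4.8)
The plan is to reduce the statement to a comparison between the sesquilinear forms associated with the two Neumann-to-Dirichlet maps, and then count negative eigenvalues using a spectral argument based on $d(\lambda_0,\mu_0,\rho_0)$. Recall from \eqref{eq_NDmap} that $\Lambda$ and $\Lambda^\flat$ are governed by the bilinear forms $B$ and $B^\flat$ obtained from \eqref{eq_weak} with coefficients \eqref{eq_lambdaMuRho} and \eqref{eq_testCoeff} respectively. The first step is to establish a monotonicity inequality of the form
\begin{align*}
(g,(\Lambda^\flat-\Lambda)g)_{L^2(\Gamma_N)^3} \;\geq\; \int_\Omega \big( (2(\mu^\flat-\mu))\hat\nabla u^\flat:\hat\nabla u^\flat + (\lambda^\flat-\lambda)|\nabla\cdot u^\flat|^2 - \omega^2(\rho^\flat-\rho)|u^\flat|^2\big)\,dx \;-\; C,
\end{align*}
where $u^\flat$ solves the test problem with load $g$, and a symmetric lower bound with the roles of $u$ and $u^\flat$ partly interchanged; such "monotonicity up to a finite-rank/compact perturbation" estimates are standard in this circle of ideas and in \cite{EP24}. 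The key point is that $\Lambda^\flat-\Lambda$ differs, modulo a finite-rank operator whose rank is controlled by $M_s$, from an operator that inherits a definiteness sign from the sign of $\mu^\flat-\mu$, $\lambda^\flat-\lambda$ and $-(\rho^\flat-\rho)$ on the relevant inclusion.

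For part (1): when $B\subset D_j$ for $j\in J$ and $\alpha_j\leq n_j$, the coefficient differences $\mu^\flat-\mu$, $\lambda^\flat-\lambda$ have a fixed favorable sign on $B$ (using $\psi_\lambda>n_1\geq\alpha_1$ etc.), and $\rho^\flat-\rho$ likewise, so the quadratic functional on the right-hand side is nonnegative. Hence $\Lambda^\flat-\Lambda$ is a nonnegative operator plus a perturbation of rank at most $M_s$ coming from the indefiniteness introduced by $\omega^2\rho$ (equivalently, from the positive eigenvalues $\sigma_k$ counted by $d(\lambda_0,\mu_0,\rho_0)$ in \eqref{eq_Neumann}); by the min-max principle this forces at most $M_s$ negative eigenvalues. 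I would make the bookkeeping precise by writing $B$ (and $B^\flat$) as a coercive part plus the $\omega^2\rho$ term, diagonalizing the latter relative to the former via \eqref{eq_Neumann}, and invoking the standard lemma that adding a rank-$m$ operator changes the count of negative eigenvalues by at most $m$.

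For part (2): when $B\not\subset\osupp(D)$ one exploits localized potentials — the Runge approximation result proven in \cite{EP24} — to construct loads $g^{(1)},\dots,g^{(N)}$ on $\Gamma_N$ whose test solutions $u^\flat$ have strain energy $\int_B(|\hat\nabla u^\flat|^2+|\nabla\cdot u^\flat|^2+|u^\flat|^2)$ arbitrarily large on $B\setminus\osupp(D)$ while remaining bounded (indeed small in energy) on $\osupp(D)$, for any prescribed $N$. Feeding these into the monotonicity inequality shows that on the $N$-dimensional span of the $g^{(i)}$ the form $g\mapsto(g,(\Lambda^\flat-\Lambda)g)$ is as negative as we like, which — again by min-max, choosing $N>M_s$ — produces more than $M_s$ negative eigenvalues. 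The main obstacle is the localized potentials step: one must verify that the Runge/localized-potentials machinery of \cite{EP24}, which is stated for the elasto-oscillatory operator $L_{\lambda,\mu,\rho}$, yields simultaneous control of \emph{all three} energy densities (the $\mu$-, $\lambda$- and $\rho$-contributions) on $B\setminus\osupp(D)$ versus $\osupp(D)$, and that the finite-rank error term in the monotonicity estimate does not degrade this control as $N\to\infty$; everything else is a routine min-max count.
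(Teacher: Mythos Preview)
The paper does not contain its own proof of this theorem: it is quoted verbatim from \cite{EP24} as background in the ``Summary of the monotonicity methods'' section, with the attribution ``[see \cite{EP24}]'' and no proof given here. So there is nothing in the present paper to compare your proposal against.

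That said, your sketch follows the standard architecture for such results and matches what is actually done in \cite{EP24}: a monotonicity inequality for $((\Lambda^\flat-\Lambda)g,g)$ in terms of the coefficient differences integrated against the energy densities of the test solution, a finite-rank correction whose rank is bounded by $d(\lambda_0,\mu_0,\rho_0)$ coming from the positive part of the spectrum of \eqref{eq_Neumann}, and then a min--max count for part (1) together with localized potentials / Runge approximation for part (2). Your identification of the main technical point --- simultaneous control of all three energy densities on $B\setminus\osupp(D)$ versus $\osupp(D)$, and the uniformity of the finite-rank error as the number of test functions grows --- is exactly the place where the work in \cite{EP24} is concentrated. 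If you were to turn this into a self-contained proof you would need to state the monotonicity inequality precisely (there are two inequalities, one with $u$ and one with $u^\flat$ on the right) and invoke the specific localized-potentials lemma from \cite{EP24} rather than leaving it as an ``obstacle''; but as a roadmap your proposal is correct.
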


\subsection{Linearized monotonicity test} \label{sec_mono_shape}
Next, we present the main theorem underlying our linearized monotonicity tests, where we introduce the associated bilinear form of the Fréchet derivative $\Lambda^\prime_{\lambda,\mu,\rho}$ as in \cite{EP24a}:
\begin{align*}
\left(\Lambda^\prime_{\lambda,\mu,\rho}[\hat{\lambda},\hat{\mu},\hat{\rho},] g,f \right)_{L^2(\Gamma_{\textup{N}})^3}
= -\int_{\Omega}2\hat{\mu} \hat{\nabla}u_g :\hat{\nabla}u_f +\hat{\lambda}
\nabla\cdot u_g \nabla\cdot u_f - \omega^2 \hat{\rho} u_g \cdot u_f\,dx.
\end{align*}

\begin{thm}[see \cite{EP24a}]\label{thm_Lin_inclusionDetection}
Let $D := D_1 \cup D_2 \cup D_3$, where the sets are as in \eqref{eq_lambdaMuRho} and
$B \subset \Omega$  and $\alpha_j > 0$, 
and set $\alpha:=(\alpha_1,\alpha_2,\alpha_3)$. Let
$$
\mathcal{M}:= \# \big\{\sigma \in \spec(\Lambda_0 + \Lambda'_0[\alpha_1,\alpha_2,-\alpha_3] - \Lambda)
\;:\; \sigma < 0 \big\},
$$
where $\Lambda_0$ and $\Lambda$ are the NtD-maps for the coefficients $\lambda_0,\mu_0,\rho_0$ and $\lambda,\mu,\rho$ respectively,
and where $\Lambda'_0[\alpha_1,\alpha_2,-\alpha_3] := \Lambda'_0[\alpha_1\chi_B,\alpha_2\chi_B,-\alpha_3\chi_B]$. 
There exists a $\gamma_0 > 0$ such that the following holds: \\
\begin{enumerate}

\item 
Assume that  $B \subset D_j$, for $j \in J$, for some $J \subset\{1,2,3\}$.  
Then for all $\alpha_j$ with $\alpha_j \leq \gamma_0$, $j \in J$, and $\alpha_j = 0$, 
$j \notin J$, we have that  $\mathcal{M}< \infty$.
\\

\item
If $B \not \subset \osupp(D)$, then for all $\alpha$, $|\alpha| \neq 0$, $\mathcal{M} = \infty$.\\

\end{enumerate}
\end{thm}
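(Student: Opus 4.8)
The plan is to reduce the statement to the already-established standard monotonicity result by exploiting the relationship between the NtD-map for the test coefficients and the linearization $\Lambda'_0$. The central identity one needs is a Löwner-type two-sided bound connecting $\Lambda^\flat - \Lambda$ (the difference used in the standard test) with $\Lambda_0 + \Lambda'_0[\alpha_1,\alpha_2,-\alpha_3] - \Lambda$. Concretely, by monotonicity of the coefficient-to-NtD map and convexity, for small enough $\alpha$ one expects an estimate of the form
\begin{align*}
\Lambda^\flat - \Lambda \;\leq\; \Lambda_0 + \Lambda'_0[\alpha_1,\alpha_2,-\alpha_3] - \Lambda \;+\; R(\alpha),
\end{align*}
where $R(\alpha)$ is a remainder that is either of definite sign or of finite rank / controlled by $\|\alpha\|^2$. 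The first step is therefore to write out the bilinear forms of $\Lambda^\flat - \Lambda_0$ and of $\Lambda'_0[\alpha_1,\alpha_2,-\alpha_3]$ using \eqref{eq_NDmap} and the displayed formula for the Fréchet derivative, and to compare them using the standard localized-potential monotonicity inequalities from \cite{EP24}. This should show that the two self-adjoint operators differ by a term whose negative part has rank bounded independently of $B$ and $\alpha$ (this is where $\gamma_0$ enters: one needs $\alpha$ small enough that the quadratic remainder in the Taylor expansion of $\alpha \mapsto \Lambda^\flat$ does not overwhelm the linear term).

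For part (1), assume $B \subset D_j$ for $j \in J$ with $\alpha_j \le \gamma_0$. I would invoke part (1) of the standard monotonicity theorem (with the bound $n_j$ there replaced by $\gamma_0 \le \min_j n_j$) to conclude that $\Lambda^\flat - \Lambda$ has at most $M_s$ negative eigenvalues, hence finitely many. Then, using the comparison from the previous paragraph together with the min-max principle (Weyl's inequality for the number of negative eigenvalues of a sum), the finiteness transfers to $\Lambda_0 + \Lambda'_0[\alpha_1,\alpha_2,-\alpha_3] - \Lambda$: a finite-rank or sign-definite perturbation changes the count of negative eigenvalues only by a finite amount. Thus $\mathcal{M} < \infty$.

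For part (2), suppose $B \not\subset \osupp(D)$. Here the natural route is to argue directly rather than through the standard test, because we need $\mathcal{M} = \infty$ for \emph{all} $|\alpha| \neq 0$ including large $\alpha$, where the comparison above degrades. The argument should parallel the proof of part (2) of the standard theorem in \cite{EP24a}: pick a point $z \in B \setminus \osupp(D)$ and a small ball around it disjoint from $D$, then use the localized potentials (whose existence is guaranteed by the Runge approximation result of \cite{EP24}) to construct, for every $N$, an $N$-dimensional subspace of boundary loads on which the quadratic form of $\Lambda_0 + \Lambda'_0[\alpha_1,\alpha_2,-\alpha_3] - \Lambda$ is negative definite. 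The key is that on this subspace the $\Lambda'_0$ contribution — which is $-\int_B (2\alpha_2 |\hat\nabla u_g|^2 + \alpha_1 |\nabla\cdot u_g|^2 - \omega^2 \alpha_3 |u_g|^2)\,dx$ up to sign conventions, localized to $B$ — dominates, since the localized potentials can be made to concentrate their elastic energy near $z \in B$ while the remaining terms $\Lambda_0 - \Lambda$ stay bounded on that subspace (as $D$ is away from $z$).

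The main obstacle I anticipate is the low-frequency term: unlike the static case, the presence of $-\omega^2 \rho u \cdot u$ means the relevant quadratic forms are not sign-definite, and one must ensure that the localized potentials concentrate the \emph{strain} energy $\int_B |\hat\nabla u_g|^2$ (or $\int_B |\nabla\cdot u_g|^2$, depending on which $D_j$ contains $B$) strongly enough to beat both the $\omega^2$ zeroth-order term and the indefinite background contribution $\Lambda_0 - \Lambda$. This is exactly the technical heart of \cite{EP24a}, and for the present statement the same localized-potentials machinery should carry over verbatim once the comparison in part (1) is set up; the only genuinely new bookkeeping is tracking how the remainder $R(\alpha)$ affects the eigenvalue count in part (1), which forces the smallness constraint $\alpha_j \le \gamma_0$.
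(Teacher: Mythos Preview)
The paper does not contain a proof of this theorem. Theorem~\ref{thm_Lin_inclusionDetection} is quoted verbatim as a known result from \cite{EP24a} (note the attribution ``see \cite{EP24a}'' in the theorem header), and no proof environment follows it in the text; the paper immediately moves on to Lemma~\ref{lem_Lin_inclusionDetection}. So there is nothing here to compare your proposal against.

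As a standalone sketch your outline is broadly reasonable and in the spirit of the monotonicity literature: a monotonicity/convexity inequality linking the Fr\'echet derivative to the NtD difference for part~(1), and localized potentials via Runge approximation for part~(2). One remark on part~(1): the route you describe --- first invoking the standard theorem for $\Lambda^\flat - \Lambda$ and then transferring the eigenvalue count through a remainder $R(\alpha)$ --- is more circuitous than what is typically done. The standard argument bounds the quadratic form of $\Lambda_0 + \Lambda'_0[\alpha_1,\alpha_2,-\alpha_3] - \Lambda$ directly from below (using the monotonicity inequality $(\Lambda_0 - \Lambda)g\cdot g \geq -\int_\Omega (\ldots)$ and the explicit form of $\Lambda'_0$), obtaining nonnegativity modulo a finite-dimensional subspace without ever passing through $\Lambda^\flat$. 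Your detour is not wrong in principle, but controlling the rank of the negative part of $R(\alpha)$ is not obviously easier than the direct argument, and the dependence of $\gamma_0$ on the geometry is cleaner in the direct approach. Since the present paper supplies no proof, however, this is commentary on your sketch rather than a comparison.
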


\begin{lem}\label{lem_Lin_inclusionDetection}
Let the assumptions of Theorem \ref{thm_Lin_inclusionDetection} hold. Let
$$
\mathcal{M}_k:= \# \big\{\sigma \in \spec(\Lambda_0 + \Lambda'_0[\alpha_1\chi_{B_k},\alpha_2\chi_{B_k},-\alpha_3\chi_{B_k}] - \Lambda)
\;:\; \sigma < 0 \big\},
$$
where $\Lambda_0$ and $\Lambda$ are the NtD-maps for the coefficients $\lambda_0,\mu_0,\rho_0$ and $\lambda,\mu,\rho$ respectively. Let further $\mathcal{B}=\left\{B_k\ |\ B_k\subset \Omega\right\}$ be a fixed finite set. Then there exists a $\gamma_0 > 0$ and a $0\leq M_l$ such that the following holds: \\

 \begin{enumerate}

\item 
Assume that  $B_k \subset D_j$, for $j \in J$, for some $J \subset\{1,2,3\}$.  
Then for all $\alpha_j$ with $\alpha_j \leq \gamma_0$, $j \in J$, and $\alpha_j = 0$, 
$j \notin J$, we have that  $\mathcal{M}_k\leq M_l$.
\\

\item
If $B_k \not \subset \osupp(D)$, then for all $\alpha$, $|\alpha| \neq 0$, $\mathcal{M}_k > M_l$.

\end{enumerate}
\end{lem}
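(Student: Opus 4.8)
The plan is to upgrade Theorem \ref{thm_Lin_inclusionDetection}, which is a pointwise statement about a single test inclusion $B$, to a uniform statement over the finite family $\mathcal{B} = \{B_k\}$, and in particular to replace the finiteness conclusion $\mathcal{M} < \infty$ in part (1) by an explicit uniform bound $M_l$. The key observation is that a finite set has only finitely many subsets $J \subset \{1,2,3\}$ to consider and finitely many indices $k$, so nothing more than taking maxima and minima over a finite index set is needed — provided one is careful that the bound in part (1) is truly uniform in the admissible range of $\alpha$, not just finite for each fixed $\alpha$.

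First I would invoke Theorem \ref{thm_Lin_inclusionDetection} for each $B_k \in \mathcal{B}$ separately, obtaining for each $k$ a threshold $\gamma_0^{(k)} > 0$ and, in the situation of part (1), finiteness of the count $\mathcal{M}_k$. Setting $\gamma_0 := \min_k \gamma_0^{(k)} > 0$ (a minimum over a finite set, hence positive) gives a single threshold that works simultaneously for all $B_k$. For part (2) there is nothing further to prove beyond choosing $M_l$ large enough: if $B_k \not\subset \osupp(D)$, then $\mathcal{M}_k = \infty$ by Theorem \ref{thm_Lin_inclusionDetection}(2), which certainly exceeds any finite $M_l$. So the entire content of the lemma is concentrated in producing the finite uniform bound $M_l$ for case (1) and then making sure the dichotomy "$\leq M_l$ versus $> M_l$" is clean, which here is automatic since the part-(2) count is literally $+\infty$.

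The main obstacle is therefore to argue that in case (1) the count $\mathcal{M}_k$ is bounded uniformly over all admissible $\alpha_j \le \gamma_0$ (with $\alpha_j = 0$ for $j \notin J$), not merely finite for each such $\alpha$. I would address this by going back to the mechanism behind Theorem \ref{thm_Lin_inclusionDetection}(1): the finiteness there comes from a monotonicity/compactness argument in which $\Lambda_0 + \Lambda'_0[\alpha_1\chi_{B_k},\alpha_2\chi_{B_k},-\alpha_3\chi_{B_k}] - \Lambda$ is shown to be, up to a finite-rank (or finite-dimensional negative part) perturbation, a positive semidefinite operator when $B_k \subset D_j$ for $j \in J$ and $\alpha_j$ is small; the dimension of that negative part is controlled by $d(\lambda_0,\mu_0,\rho_0)$ together with the localized-potentials estimates from \cite{EP24}, \cite{EP24a}, and these bounds do not deteriorate as $\alpha_j$ decreases toward $0$ within $[0,\gamma_0]$. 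Concretely, the linearized monotonicity inequality gives $\Lambda'_0[\alpha_1\chi_{B_k},\dots] \le \Lambda'_0[\gamma_0\chi_{B_k},\dots]$ in the relevant sign pattern, so the negative part of the difference operator for parameter $\alpha$ is dominated by that for the extremal parameter $\gamma_0$, reducing the uniformity to a single operator per $(k,J)$ pair.

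Having reduced to finitely many operators, I would set
\begin{align*}
M_l := \max_{k} \ \max_{J \subset \{1,2,3\}} \ \#\big\{ \sigma \in \spec\big(\Lambda_0 + \Lambda'_0[\beta_1^J\chi_{B_k}, \beta_2^J\chi_{B_k}, -\beta_3^J\chi_{B_k}] - \Lambda\big) \;:\; \sigma < 0 \big\},
\end{align*}
where $\beta_j^J = \gamma_0$ for $j \in J$ and $\beta_j^J = 0$ otherwise, each inner count being finite by Theorem \ref{thm_Lin_inclusionDetection}(1). This is a finite maximum of finite numbers, hence a finite $M_l \ge 0$, and by the domination argument of the previous paragraph $\mathcal{M}_k \le M_l$ for every admissible $\alpha$ in case (1). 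Together with the trivial observation that $\mathcal{M}_k = \infty > M_l$ in case (2), this establishes both assertions with the common threshold $\gamma_0 = \min_k \gamma_0^{(k)}$. The only subtlety worth spelling out in the writeup is the monotone-domination step, since the rest is bookkeeping over a finite index set; I would state it as a short consequence of the linearized monotonicity relation already used in \cite{EP24a}.
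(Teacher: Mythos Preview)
Your approach is essentially the same as the paper's: invoke Theorem~\ref{thm_Lin_inclusionDetection} for each $B_k$, take the maximum over the finite family to produce $M_l$, and use $\mathcal{M}_k=\infty$ in case~(2). The paper's own argument is actually terser than yours: it simply sets $M_l=\max_{\{k:\mathcal{M}_k<\infty\}}\mathcal{M}_k$ and declares this finite because $\mathcal{B}$ is finite, without discussing uniformity in $\alpha$ or the choice of $\gamma_0$. Your additional step of dominating the negative part at a general admissible $\alpha$ by that at the extremal $\alpha_j=\gamma_0$ (via the sign of the Fr\'echet-derivative quadratic form) is a genuine refinement that the paper glosses over; note, however, that the inequality goes the other way than you wrote---larger $\alpha_j$ make $\Lambda'_0[\alpha_1\chi_{B_k},\alpha_2\chi_{B_k},-\alpha_3\chi_{B_k}]$ \emph{more} negative, so the count at any admissible $\alpha$ is bounded by the count at $\gamma_0$, which is the conclusion you want. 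Two minor clean-ups: your maximum defining $M_l$ should be restricted to pairs $(k,J)$ for which $B_k\subset D_j$ for all $j\in J$, since Theorem~\ref{thm_Lin_inclusionDetection}(1) is only asserted there; and $\gamma_0=\min_k\gamma_0^{(k)}$ is a useful explicit point that the paper leaves implicit.
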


\begin{proof}
Assume that $B_k \subset D_j$, for $j \in J$, for some $J \subset\{1,2,3\}$.  
Then for all $\alpha_j$ with $\alpha_j \leq \gamma_0$, $j \in J$, and $\alpha_j = 0$, 
$j \notin J$, we have that  $\mathcal{M}_k$ is finite  due to Theorem \ref{thm_Lin_inclusionDetection}. Hence, we set 
$$
M_l=\max_{\left\{k\ |\ \mathcal{M}_k<\infty\right\}}\mathcal{M}_k
$$
$M_l<\infty$, since the set $\mathcal{B}$ is finite as well.
Further $\mathcal{M}_k\leq M_l$ due to the construction of $M_l$.\\
\\
On the other hand, if $B_k \not \subset \osupp(D)$, then for all $\alpha$, $|\alpha| \neq 0$, $\infty=\mathcal{M}_k > M_l$ due to Theorem \ref{thm_Lin_inclusionDetection}.
\end{proof}

 \section{Monotonicity tests for noisy data}
 \noindent
 \\
 Next, we formulate the monotonicity tests for noisy data and prove the corresponding theorems.

 \subsection{Standard monotonicity test}
We start with the standard monotonicity tests for noisy data.
 \begin{thm}\label{theo_standard_noisy}
 Let $D:=D_1\cup D_2 \cup D_3$, where the sets are as in (\ref{eq_lambdaMuRho}), $B\subset \Omega$, $\alpha_j>0$ be as in (\ref{eq_testCoeff}), and set $\alpha:=(\alpha_1,\alpha_2,\alpha_3)$.
 Let $M_s\in\mathbb{R}$ be defined as 
 \begin{align*}
 M_s:=d(\lambda_0,\mu_0,\rho_0),
 \end{align*}
 \noindent
 where $d(\lambda_0,\mu_0,\rho_0)$ is the number of positive eigenvalues of $L_{\lambda_0,\mu_0,\rho_0}$ as defined in (\ref{eq_def_d}). Further, let 
 \begin{align*}
 \Vert \Lambda^\delta(\lambda,\mu,\rho) - \Lambda(\lambda,\mu,\rho)\Vert < \delta.
 \end{align*}
 \noindent
 Then there exists a maximal noise level $\delta_0>0$, such that for all $0< \delta <\delta_0$ we have the following statements:
 \begin{itemize}
 \item[(1)] Assume that $B\subset D_j$ for $j\in J$ for some $J\subset \lbrace 1,2,3\rbrace$. Then for all $\alpha_j$ with $\alpha_j\leq n_j$, $j\in J$ and $\alpha_j=0$, $j\not\in J$, the map
$\Lambda^\flat - \Lambda^\delta$
 has at most $M_s$ eigenvalues smaller than $-\delta$.
 \item[(2)] If $B\not\subset \osupp(D)$, then for all $\alpha,\,|\alpha|\neq 0$, the map
$\Lambda^\flat - \Lambda^\delta$
 has more than $M_s$ eigenvalues smaller than $-\delta$,
 \end{itemize}
 \noindent
 where $\Lambda^\delta$ is the Neumann-to-Dirichlet map for the noisy problem.
 \end{thm}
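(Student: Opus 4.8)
The plan is to obtain the noisy statement from the noiseless version (the theorem at the beginning of Subsection~\ref{standard}) by an eigenvalue-counting perturbation argument. Write
\[
\Lambda^\flat - \Lambda^\delta \;=\; (\Lambda^\flat - \Lambda) + E_\delta, \qquad E_\delta := \Lambda - \Lambda^\delta, \qquad \|E_\delta\| < \delta,
\]
where all three maps are self-adjoint operators on $L^2(\Gamma_N)^3$ (symmetry of the bilinear form $B$; the noisy map $\Lambda^\delta$ is taken self-adjoint as well) and $\Lambda^\flat-\Lambda$ is compact. Hence $\Lambda^\flat-\Lambda^\delta$ is a compact self-adjoint operator perturbed by a self-adjoint operator of norm less than $\delta$, so its essential spectrum is contained in $(-\delta,\delta)$; consequently the part of its spectrum below $-\delta$ consists of finitely many eigenvalues of finite multiplicity and the number of eigenvalues below $-\delta$ is a well-defined finite quantity. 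For a bounded self-adjoint $S$ let $\mathcal N(S,t)$ denote the number of spectral points of $S$ strictly below $t$ counted with multiplicity, which is finite when $t$ lies below the essential spectrum of $S$. I would use the standard variational bound: if a subspace $V$ satisfies $(Sx,x)<t\|x\|^2$ for all $x\in V\setminus\{0\}$, then $\dim V\le\mathcal N(S,t)$, because $V$ meets the spectral subspace of $S$ for $[t,\infty)$ only in $0$. Testing the quadratic form of one of the two operators $\Lambda^\flat-\Lambda$, $\Lambda^\flat-\Lambda^\delta$ on a spectral subspace of the other and absorbing $E_\delta$ then yields, with the threshold $t=-\delta$,
\[
\mathcal N\bigl(\Lambda^\flat-\Lambda,\,-\delta-\|E_\delta\|\bigr)\;\le\;\mathcal N\bigl(\Lambda^\flat-\Lambda^\delta,\,-\delta\bigr)\;\le\;\mathcal N\bigl(\Lambda^\flat-\Lambda,\,-\delta+\|E_\delta\|\bigr),
\]
all three counts being finite since $-\delta\pm\|E_\delta\|<0$ for the outer two and the essential spectrum of $\Lambda^\flat-\Lambda^\delta$ is contained in $(-\delta,\delta)$ for the middle one.

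For statement~(1): under the stated hypotheses on $B$ and $\alpha$ the noiseless theorem gives that $\Lambda^\flat-\Lambda$ has at most $M_s$ negative eigenvalues, i.e. $\mathcal N(\Lambda^\flat-\Lambda,0)\le M_s$. Since $\|E_\delta\|<\delta$ forces $-\delta+\|E_\delta\|<0$, the right-hand inequality gives $\mathcal N(\Lambda^\flat-\Lambda^\delta,-\delta)\le\mathcal N(\Lambda^\flat-\Lambda,-\delta+\|E_\delta\|)\le\mathcal N(\Lambda^\flat-\Lambda,0)\le M_s$, which is exactly~(1); note that no smallness of $\delta$ is needed for this half. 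For statement~(2): the noiseless theorem gives $\mathcal N(\Lambda^\flat-\Lambda,0)\ge M_s+1$, so $\Lambda^\flat-\Lambda$ has an $(M_s+1)$-st negative eigenvalue $-c$ with $c>0$ (negative eigenvalues ordered increasingly), and therefore $\mathcal N(\Lambda^\flat-\Lambda,s)\ge M_s+1$ for every $s>-c$. Set $\delta_0:=c/2$. For $0<\delta<\delta_0$ one has $\delta+\|E_\delta\|<2\delta<c$, hence $-\delta-\|E_\delta\|>-c$, and the left-hand inequality gives $\mathcal N(\Lambda^\flat-\Lambda^\delta,-\delta)\ge\mathcal N(\Lambda^\flat-\Lambda,-\delta-\|E_\delta\|)\ge M_s+1>M_s$, which is~(2). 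Finally, for a fixed test inclusion $B$ the hypotheses $B\subset D_j$ and $B\not\subset\osupp(D)$ are mutually exclusive, so one takes $\delta_0:=c/2$ as above in the case $B\not\subset\osupp(D)$ and $\delta_0$ arbitrary (say $\delta_0=1$) otherwise; in either case the asserted dichotomy holds for all $0<\delta<\delta_0$.

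The step requiring the most care is the perturbation estimate itself. Because $\Lambda^\delta$ is only assumed $\delta$-close to $\Lambda$ in operator norm and need not be compact, $\Lambda^\flat-\Lambda^\delta$ may carry essential spectrum, so one cannot appeal to a global "$k$-th eigenvalue"; the remedy is the observation that this essential spectrum is trapped in $(-\delta,\delta)$, whence the spectrum below $-\delta$ is purely discrete and finite, the relevant spectral subspace is finite-dimensional, and the variational comparison above goes through without change. A secondary point worth flagging is that $\delta_0$ is not explicit: it is governed by the modulus $c$ of the $(M_s+1)$-st eigenvalue of $\Lambda^\flat-\Lambda$, which depends on the unknown inclusion; when one works with a fixed finite family of test inclusions — as in Lemma~\ref{lem_Lin_inclusionDetection} for the linearized test — the admissible noise level is the minimum of the corresponding thresholds $\delta_0$, which is still strictly positive.
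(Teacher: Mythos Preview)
Your proof is correct and follows essentially the same perturbation idea as the paper: absorb the noise $\|E_\delta\|<\delta$ into the eigenvalue threshold via the quadratic-form/min--max principle, invoke the noiseless theorem (Subsection~\ref{standard}) for the eigenvalue count of $\Lambda^\flat-\Lambda$, and take $\delta_0=-\theta/2$ with $\theta$ the relevant negative eigenvalue. Your treatment is in fact somewhat more careful than the paper's, since you phrase everything through the counting function $\mathcal N$ and explicitly address the possible essential spectrum of $\Lambda^\flat-\Lambda^\delta$, whereas the paper argues directly with a single test vector $g\in V^\perp$.
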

 \noindent
 \begin{proof}
 We start with the case $B\not\subset D$:
\\
$\Lambda^\flat - \Lambda$ is compact and self-adjoint and by Theorem $6.5$ from \cite{EP24} there exists a $g\in V^\perp$ with
\begin{align*}
((\Lambda^\flat - \Lambda)g,g)\not\geq 0
\end{align*}
\noindent
with $\text{dim}(V)= M_s$. Hence, $\Lambda^\flat - \Lambda$ has more than $M_s$ negative eigenvalues. Let $\theta<0$ be the smallest eigenvalue with corresponding eigenvector $g\in V^\perp$. Then
 \begin{align*}
 &((\Lambda^\flat - \Lambda^\delta )g,g)\\
 &\leq ((\Lambda^\flat - \Lambda)g,g) + \delta\Vert g\Vert^2\\
 &=(\theta +  \delta)\Vert g\Vert^2\\
 &<-\delta \Vert g\Vert^2
 \end{align*}
 \noindent
 for all $0<\delta < \delta_0:=-\dfrac{\theta}{2}$.
 \\
 \\
On the other hand, if $B \subset D$, then for all $g\in V^\perp$
 \begin{align*}
 &((\Lambda^\flat - \Lambda^\delta )g,g)\\
 &\geq -\delta \Vert g\Vert ^2.
 \end{align*}
 \end{proof}
 \noindent
All in all, we end up with the following algorithm for the reconstruction based on noisy data with the standard monotonicity test:
\begin{algorithm}[H] 

\caption{Reconstruction of the inclusion $\text{osupp}(D) \subset \Omega$}\label{alg_stand}
\begin{algorithmic}[1]

\STATE  Choose a set $\mathcal{B} = \{ B \subset \Omega\}$ and set $\mathcal{A} = \{\}$.

\STATE  Choose $\tilde{M}_s$.

\FOR{  $B \in \mathcal{B}$}

\FOR{  $\Lambda^\flat$ with varied parameters by Theorem \ref{standard}}

\STATE Compute $\displaystyle\mathcal{M}_B := \sum_{\sigma_k < -\delta} 1$, where $\sigma_k$ are the eigenvalues of $\Lambda^\flat - \Lambda^\delta$ 

\IF {$ \mathcal{M}_B \leq \tilde{M}_s $}

\STATE 
Add $B$ to the approximating collection $\mathcal{A}$, since 
Theorem \ref{standard} suggests 
\STATE 
that $B \subset \osupp(D)$. 

\ELSE

\STATE Discard $B$, since $B \not \subset D_j$, $j=1,2,3$.

\ENDIF
\ENDFOR
\ENDFOR

\STATE  Compute the union of all elements in $\mathcal{A}$ and all components of 
$\Omega \setminus \cup \mathcal{A}$ not connected 
 to $\partial\Omega$. The resulting set is an approximation of $\text{osupp}(D)$.

\end{algorithmic}
\end{algorithm}

 \subsection{Linearized monotonicity test}
We continue with the linearized monotonicity tests.
 \begin{thm} \label{thm_Lin_inclusionDetection_noisy}
Let $D := D_1 \cup D_2 \cup D_3$, where the sets are as in \eqref{eq_lambdaMuRho} and let
$\mathcal{B}=\left\{B_k\ |\ B_k \subset \Omega\right\}$ be fixed and finite. Further, let $\alpha_j > 0$, 
and set $\alpha:=(\alpha_1,\alpha_2,\alpha_3)$. Let
$$
\mathcal{M}_k:= \# \big\{\sigma \in \spec(\Lambda_0 + \Lambda'_0[\alpha_1\chi_{B_k},\alpha_2\chi_{B_k},-\alpha_3\chi_{B_k}] - \Lambda)
\;:\; \sigma < -\delta \big\},
$$
where $\Lambda_0$ and $\Lambda$ are the NtD-maps for the coefficients $\lambda_0,\mu_0,\rho_0$ and $\lambda,\mu,\rho$ respectively. Further, let 
 \begin{align*}
 \Vert \Lambda^\delta(\lambda,\mu,\rho) - \Lambda(\lambda,\mu,\rho)\Vert < \delta.
 \end{align*}
 \noindent
 Then there exists a $\gamma_0 > 0$, a $0\leq M_l$ and a maximal noise level $\delta_0>0$, such that for all $0 < \delta <\delta_0$ we have the following statements: \\
\begin{enumerate}

\item 
Assume that  $B_k \subset D_j$, for $j \in J$, for some $J \subset\{1,2,3\}$.  
Then for all $\alpha_j$ with $\alpha_j \leq \gamma_0$, $j \in J$, and $\alpha_j = 0$, 
$j \notin J$, we have that  $\mathcal{M}_k \leq M_l$.
\\

\item
If $B_k \not \subset \osupp(D)$, then for all $\alpha$, $|\alpha| \neq 0$, $\mathcal{M}_k >M_l$.\\


\end{enumerate}
\end{thm}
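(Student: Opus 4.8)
The plan is to combine Lemma~\ref{lem_Lin_inclusionDetection} (the noise-free linearized test over a finite collection $\mathcal{B}$) with a perturbation estimate of exactly the same shape as the one used in the proof of Theorem~\ref{theo_standard_noisy}. First I would fix $\gamma_0>0$ and $M_l\ge 0$ to be precisely the constants produced by Lemma~\ref{lem_Lin_inclusionDetection}, so that for $\delta=0$ statement (1) reads $\mathcal{M}_k\le M_l$ and statement (2) reads $\mathcal{M}_k=\infty$. The operators in play, $S_k:=\Lambda_0+\Lambda'_0[\alpha_1\chi_{B_k},\alpha_2\chi_{B_k},-\alpha_3\chi_{B_k}]-\Lambda$, are compact and self-adjoint, so their spectra consist of real eigenvalues accumulating only at $0$; replacing $\Lambda$ by the noisy $\Lambda^\delta$ perturbs $S_k$ by an operator of norm $<\delta$, hence by the min-max principle shifts every eigenvalue by at most $\delta$. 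Writing $S_k^\delta:=S_k-(\Lambda^\delta-\Lambda)$, one has $((S_k^\delta)g,g)\le (S_k g,g)+\delta\|g\|^2$ and the reverse inequality with $-\delta$, for all $g$, which is the only analytic input needed.

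Next I would treat the two cases. For case (1), when $B_k\subset D_j$ for $j\in J$ and the $\alpha_j\le\gamma_0$: by Lemma~\ref{lem_Lin_inclusionDetection}(1) the operator $S_k$ has at most $M_l$ eigenvalues below $0$, equivalently there is a subspace $V$ with $\dim V=M_l$ (or less) and $(S_k g,g)\ge 0$ on $V^\perp$; then on $V^\perp$ one gets $((S_k^\delta)g,g)\ge -\delta\|g\|^2$, so $S_k^\delta$ has at most $M_l$ eigenvalues strictly below $-\delta$, i.e.\ $\mathcal{M}_k\le M_l$, for \emph{every} $\delta>0$. Case (2) is where the genuine bound on $\delta$ enters: by Lemma~\ref{lem_Lin_inclusionDetection}(2), $S_k$ has infinitely many negative eigenvalues. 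I would pick $M_l+1$ of them, say eigenvalues $\theta_1,\dots,\theta_{M_l+1}<0$ with orthonormal eigenvectors spanning a space $W$ of dimension $M_l+1$; on $W$ we have $(S_k g,g)\le (\max_i\theta_i)\|g\|^2<0$, hence $((S_k^\delta)g,g)\le(\max_i\theta_i+\delta)\|g\|^2<-\delta\|g\|^2$ as soon as $\delta< -\tfrac{1}{2}\max_i\theta_i=:\delta_0^{(k)}$. Thus $S_k^\delta$ has at least $M_l+1$ eigenvalues below $-\delta$, i.e.\ $\mathcal{M}_k>M_l$. Since $\mathcal{B}$ is finite and the collection of admissible $\alpha$ (with $|\alpha|\neq0$ and, in case~(1), the componentwise bound $\gamma_0$) only affects finitely many distinct operators up to the relevant monotone comparison, I would set $\delta_0:=\min_k\delta_0^{(k)}>0$; for $0<\delta<\delta_0$ both statements hold simultaneously.

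The main obstacle, and the point that needs care rather than new ideas, is the uniformity of $\delta_0$ over all the $\alpha$'s appearing in the statement. In case~(2) the statement quantifies over \emph{all} $\alpha$ with $|\alpha|\neq 0$, so the naive choice of $\delta_0^{(k)}$ above depends on $\alpha$ through $\theta_i=\theta_i(\alpha)$, and one must argue that it can be taken uniform. The clean way is to invoke the monotonicity structure already behind Theorem~\ref{thm_Lin_inclusionDetection}: scaling or comparing test coefficients shows that the relevant $S_k(\alpha)$ for arbitrary admissible $\alpha$ dominates (in the quadratic-form order) a fixed $S_k(\alpha^*)$ built from a single reference value $\alpha^*$, so that the $(M_l+1)$-st negative eigenvalue of $S_k(\alpha)$ is bounded above by the corresponding quantity for $\alpha^*$, giving an $\alpha$-independent gap. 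Once that reduction is in place, taking the minimum over the finitely many $k$ yields the desired $\delta_0>0$, and the argument closes exactly as in Theorem~\ref{theo_standard_noisy}. I would also remark, as in the standard case, that enlarging $M_l$ only makes case~(1) easier, so the two requirements on $M_l$ are compatible.
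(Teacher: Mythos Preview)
Your argument follows essentially the same route as the paper's: both proofs reduce to the quadratic-form perturbation estimate $(S_k^\delta g,g)=(S_kg,g)\pm\delta\|g\|^2$, treat case~(1) via the subspace $V^\perp$ on which $S_k\ge0$, and in case~(2) extract a negative spectral gap to set $\delta_0=-\theta/2$. Your write-up is in fact sharper than the paper's in case~(2): the paper speaks of ``the smallest eigenvalue'' of $S_k$, which is ill-defined here since by Lemma~\ref{lem_Lin_inclusionDetection}(2) there are infinitely many negative eigenvalues accumulating only at $0$; your device of selecting $M_l+1$ negative eigenvalues and working on their span is the correct formulation, and it also makes the finite minimum over $k\in\mathcal{B}$ explicit (the paper does not comment on this).

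The one point where your argument does not close is the uniformity of $\delta_0$ over $\alpha$ in part~(2). The monotonicity reduction you propose goes the wrong way for small $|\alpha|$: since $(\Lambda'_0[\alpha_1\chi_{B_k},\alpha_2\chi_{B_k},-\alpha_3\chi_{B_k}]g,g)$ is nonpositive and linear in $\alpha$, one has $S_k(\alpha)\le S_k(\alpha^*)$ only when $\alpha\ge\alpha^*$ componentwise, so a fixed reference $\alpha^*$ cannot dominate $S_k(\alpha)$ from above for $\alpha$ near $0$. Indeed, as $|\alpha|\to 0$ one has $S_k(\alpha)\to\Lambda_0-\Lambda$, which has only finitely many negative eigenvalues, so the $(M_l{+}1)$-st negative eigenvalue of $S_k(\alpha)$ must tend to $0$ and no $\alpha$-uniform $\delta_0>0$ can be obtained this way. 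The paper's proof does not address this issue either; it simply fixes $\delta_0=-\theta/2$ for the eigenvalue associated with the given data, so in effect both arguments establish the result for each fixed admissible $\alpha$ (and then uniformly over the finite family $\mathcal{B}$), which is also how the theorem is used in the numerics.
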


\begin{proof}
 We start with the case $B_k\not\subset D$:
\\
$\Lambda_0 + \Lambda_0^\prime[\alpha_1\chi_{B_k},\alpha_2\chi_{B_k},-\alpha_3\chi_{B_k}]- \Lambda$ is compact and self-adjoint and by Theorem 6.1 from \cite{EP24a} as well as Lemma \ref{lem_Lin_inclusionDetection}, there exists a finite dimensional vector space $V $ of dimension $M_l$ and a $g\in V^\perp$ with
\begin{align*}
((\Lambda_0 + \Lambda_0^\prime[\alpha_1\chi_{B_k},\alpha_2\chi_{B_k},-\alpha_3\chi_{B_k}]- \Lambda)g,g)\not\geq 0.
\end{align*}
\noindent
 Hence, $\Lambda_0 + \Lambda_0^\prime[\alpha_1\chi_{B_k},\alpha_2\chi_{B_k},-\alpha_3\chi_{B_k}]- \Lambda$ has more than $M_l$ negative eigenvalues. Let $\theta<0$ be the smallest eigenvalue with corresponding eigenvector $g\in V^\perp$. Then
 \begin{align*}
 &((\Lambda_0 + \Lambda_0^\prime[\alpha_1\chi_{B_k},\alpha_2\chi_{B_k},-\alpha_3\chi_{B_k}] - \Lambda^\delta )g,g)\\
 &\leq ((\Lambda_0 + \Lambda_0^\prime[\alpha_1\chi_{B_k},\alpha_2\chi_{B_k},-\alpha_3\chi_{B_k}] - \Lambda)g,g) + \delta \Vert g\Vert^2\\
 &=(\theta + \delta)\Vert g\Vert^2\\
 &<-\delta \Vert g\Vert^2
 \end{align*}
 \noindent
 for all $0<\delta < \delta_0:=-\dfrac{\theta}{2}$, where $\theta<0$  is the smallest negative eigenvalue.
 \\
 \\
 On the other hand, if $B_k \subset D$, then via Lemma \ref{lem_Lin_inclusionDetection}, we have that for all $g\in V^\perp$
 \begin{align*}
 &((\Lambda_0 + \Lambda_0^\prime[\alpha_1\chi_{B_k},\alpha_2\chi_{B_k},-\alpha_3\chi_{B_k}] - \Lambda^\delta )g,g)\\
 &\geq -\delta \Vert g\Vert ^2.
 \end{align*}
 \end{proof}
\noindent
\\
In order to close this section, we formulate the corresponding algorithm:
\begin{algorithm} 
\caption{Linearized reconstruction of  $\osupp(D) \subset \Omega$.}\label{alg_shapeInclusion}
\begin{algorithmic}[1]

\STATE  Choose a set $\mathcal{B} = \{ B \subset \Omega\}$ and set $\mathcal{A} = \{\}$.

\STATE  Choose $\tilde{M}_l$.


\FOR{  $B \in \mathcal{B}$}

\FOR{  $\Lambda^\flat$ with parameters varied as suggested by Theorem \ref{thm_Lin_inclusionDetection}}

\STATE Compute $\displaystyle\mathcal{M}_B := \sum_{\sigma_k < -\delta} 1$, where $\sigma_k$ are the eigenvalues of 
\STATE $\Lambda_0 + \Lambda'_0[\alpha_1,\alpha_2,-\alpha_3] - \Lambda^\delta$ 

\IF {$ \mathcal{M}_B < \tilde{M}_l$}

\STATE 
Add $B$ to the approximating collection $\mathcal{A}$, since 
Theorem \ref{thm_Lin_inclusionDetection} suggests 
\STATE 
that $B \subset \osupp(D)$. 

\ELSE

\STATE Discard $B$, since by Theorem \ref{thm_Lin_inclusionDetection} $B \not \subset D_j$, $j=1,2,3$.

\ENDIF
\ENDFOR
\ENDFOR

\STATE  Compute the union of all elements in $\mathcal{A}$ and all components of 
$\Omega \setminus \cup \mathcal{A}$ not connected 
\STATE to $\p\Omega$. The resulting set is an approximation of $\osupp(D)$.

\end{algorithmic}
\end{algorithm}

\section{Numerical simulations} \label{sec_numerics}
\noindent
We introduce the noise matrix as
\begin{align*}
E=\dfrac{\tilde{E}}{\Vert \tilde{E} \Vert},
\end{align*}
\noindent
where $\tilde{E}$ is uniformly random on $[-1,1]$ and define the perturbed Neumann-to-Dirichlet operator via
\begin{align*}
\Lambda^\delta=\Lambda + \eta \Vert\Lambda\Vert E.
\end{align*}
\noindent
It holds that
\begin{align*}
\Vert \Lambda -\Lambda^\delta \Vert \leq \delta.
\end{align*}
\noindent
Indeed, we have
\begin{align*}
\Vert \Lambda^\delta - \Lambda \Vert
= \left \Vert \Lambda + \eta \Vert \Lambda\Vert E -\Lambda\right \Vert 
= \underbrace{\eta\Vert \Lambda\Vert}_{=\delta}.
\end{align*}
\noindent
This means for our monotonicity methods that in the standard tests
\begin{align*}
\Lambda^\flat - \Lambda^\delta
\end{align*}
\noindent
has at most $M_s$ eigenvalues smaller than $-\delta$ and in the linearized tests
\begin{align*}
\Lambda_0 + \Lambda_0^\prime[\alpha_1,\alpha_2,\alpha_3] - \Lambda^\delta
\end{align*}
\noindent
has at most $M_l$ smaller than $-\delta$ eigenvalues, if the test inclusion $B$ is part of the inclusion to be reconstructed.
\\
\\
Based on the two algorithms (see Algorithm \ref{alg_stand} and \ref{alg_shapeInclusion}), we present some numerical tests and examine an artificial test object with two inclusions (blue) shown in Figure \ref{testobject}. The size of our test object is $1\,m^3$.
\begin{figure}[H]
\centering 
\includegraphics[width=0.33\textwidth]{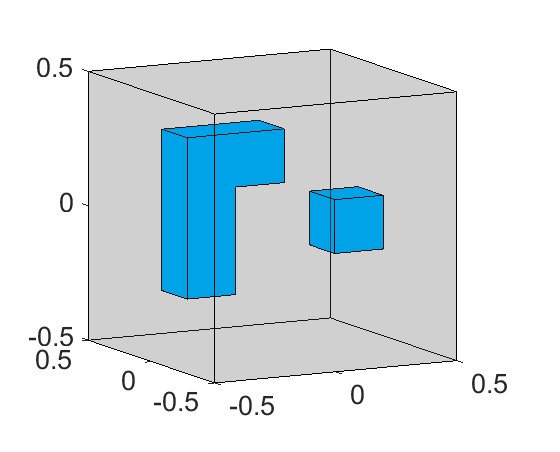}
\caption{Cube with two inclusions (blue) (similar as the model in \cite{EH21}).}\label{testobject}
\end{figure}
\noindent
The parameters of the corresponding materials are given in Table \ref{lame_parameter_mono}
\renewcommand{\arraystretch}{1.3} 
\begin{table} [H]
 \begin{center}
 \begin{tabular}{ |c|c| c |c |}  
\hline
 material & $\lambda_i$ & $\mu_i$ & $\rho_i$\\
  \hline
$i=0$: background &  $6\cdot 10^5$   &  $6\cdot 10^3$  & $3\cdot 10^3$ \\
 \hline
$i=1$: inclusion &  $2\cdot 10^6$ &  $2\cdot 10^4$  & $10^3$ \\
\hline
\end{tabular}
\end{center}
\caption{Lam\'e parameter $\lambda$ and $\mu$ in [$Pa$] and density $\rho$ in [$kg/m^3$].}
\label{lame_parameter_mono}
\end{table}
\noindent
Given an angular frequency $\omega$, the $s$-wavelength and $p$-wavelength for the homogeneous background material are defined via
\begin{align*}
l_p= 2\pi \dfrac{v_p}{k} \quad\text{and}\quad l_s= 2\pi \dfrac{v_s}{k}
\end{align*}
\noindent
with the velocities
$v_p=\sqrt{\frac{\lambda_0+2\mu_0}{\rho_0}}$ and  $v_s=\sqrt{\frac{\mu_0}{\rho_0}}$.

\subsection{Standard monotonicity test}
We start with the standard monotonicity tests and consider different noise levels $\eta$ for the frequency $\omega=50\,rad/s$ (see Figure \ref{standard_noise_0} - Figure \ref{standard_noise_0_025}).
In this example, we assume that the bottom of the test object is fixed and each of the remaining surfaces is divided into $10\times10$ patches, where we apply a normal force on each patch which results in $500$ boundary loads.

\begin{figure}[H]
\centering 
\includegraphics[width=0.38\textwidth]{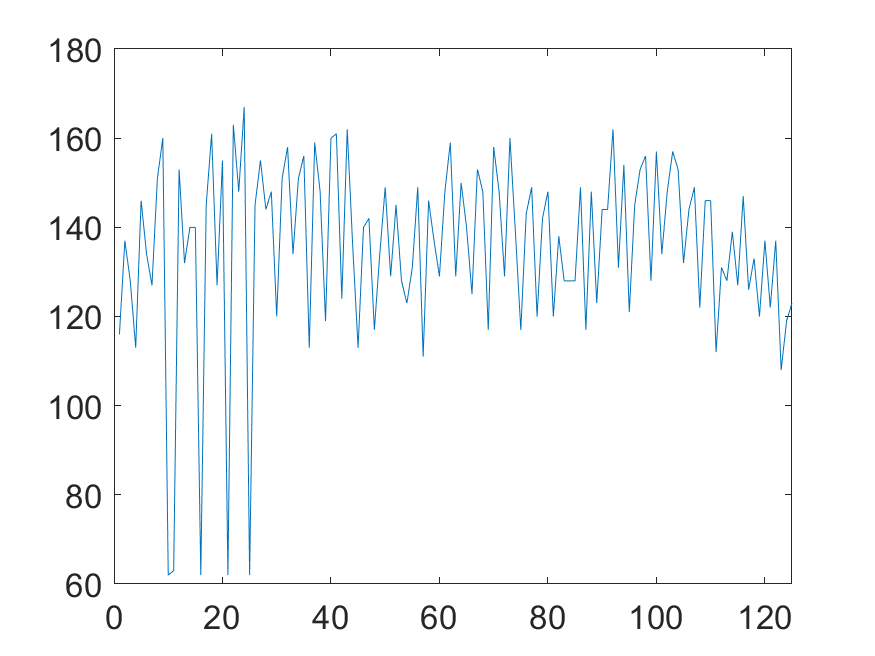}
\includegraphics[width=0.38\textwidth]{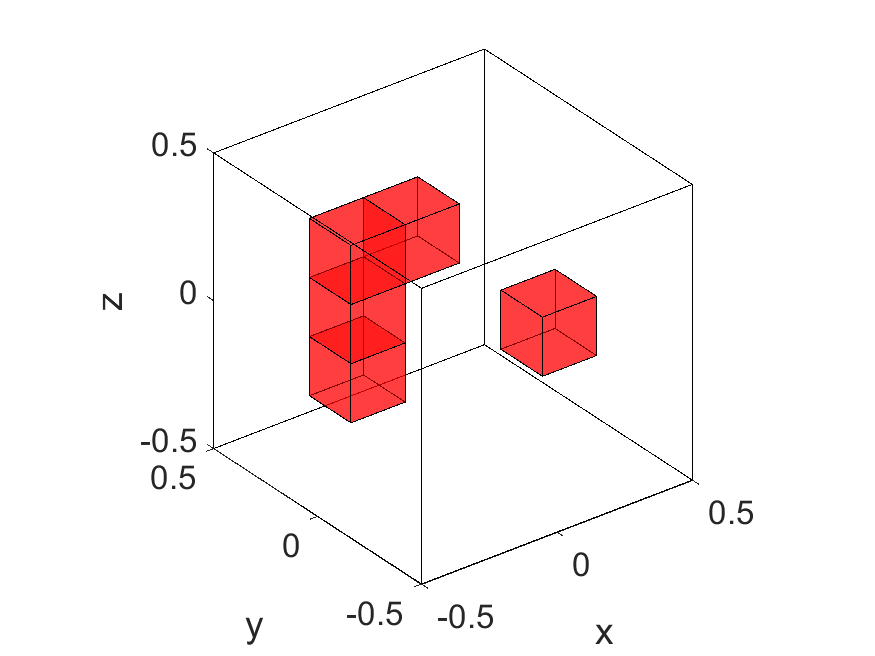}
\caption{Plot of the number of negative eigenvalues (left) and the reconstruction (right) for $\omega=50\,rad/s$ ($l_p=1.79\,m$, $l_s=0.18\,m$ for the homogenous background
material) for noise level $\eta=0$ and $\tilde{M}_s=107$.}\label{standard_noise_0}
\end{figure}

\begin{figure}[H]
\centering 
\includegraphics[width=0.38\textwidth]{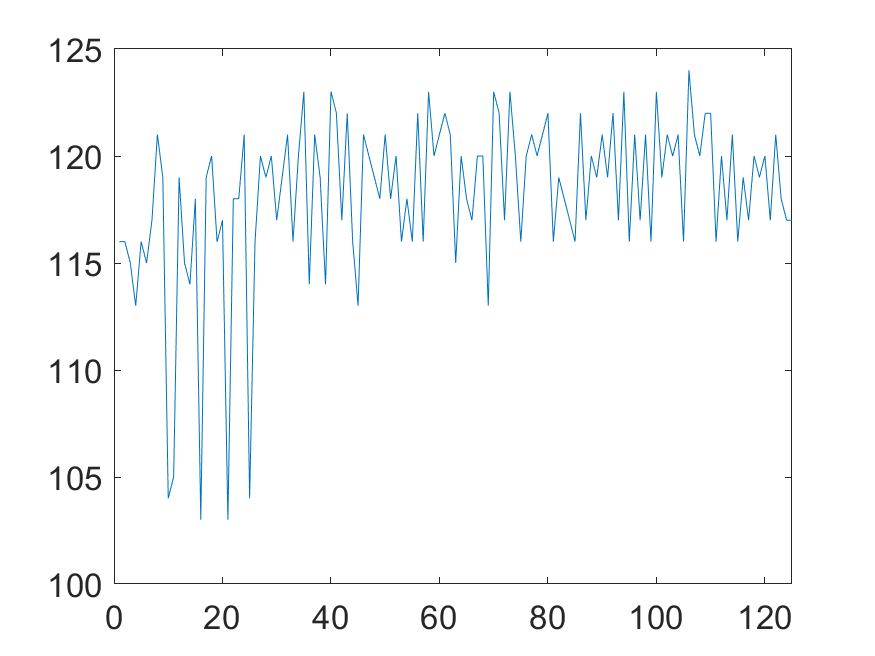}
\includegraphics[width=0.38\textwidth]{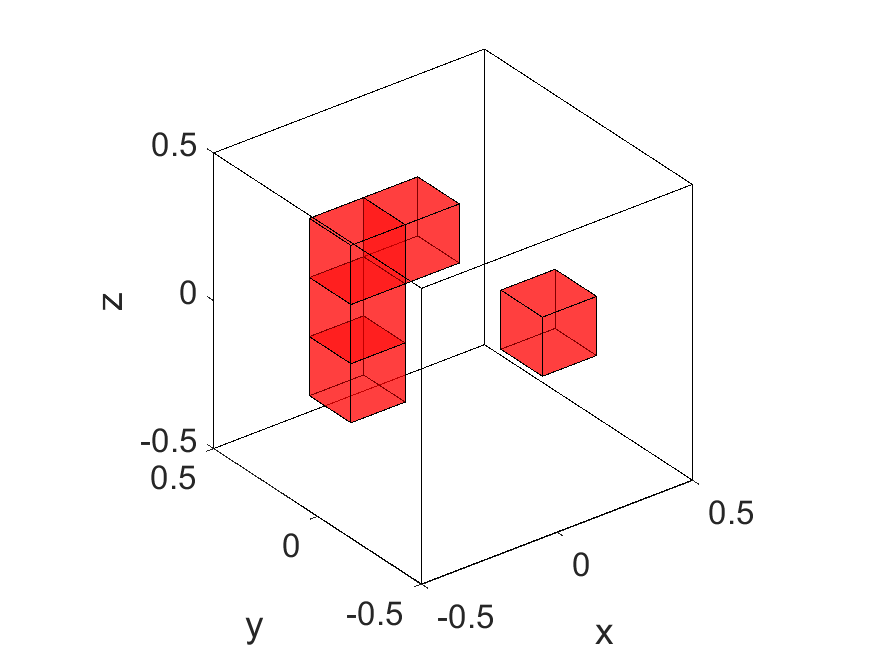}
\caption{Plot of the number of negative eigenvalues (left) and the reconstruction (right) for $\omega=50\,rad/s$ ($l_p=1.79\,m$, $l_s=0.18\,m$ for the homogenous background
material) for noise level $\eta=0.005$, $\delta=1\cdot 10^{-6}$ and $\tilde{M}_s=107$.}\label{standard_noise_0_005}
\end{figure}

\begin{figure}[H]
\centering 
\includegraphics[width=0.38\textwidth]{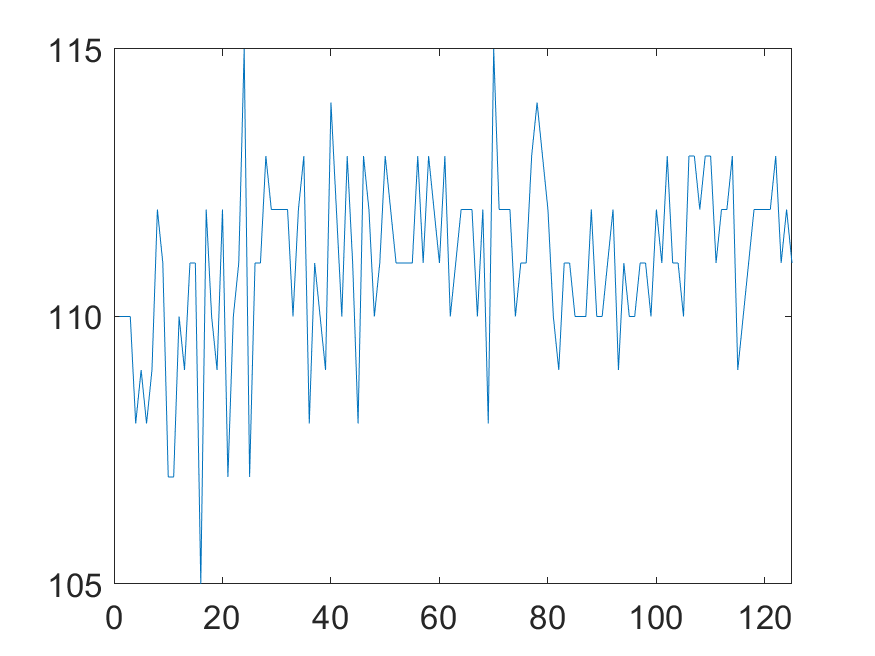}
\includegraphics[width=0.38\textwidth]{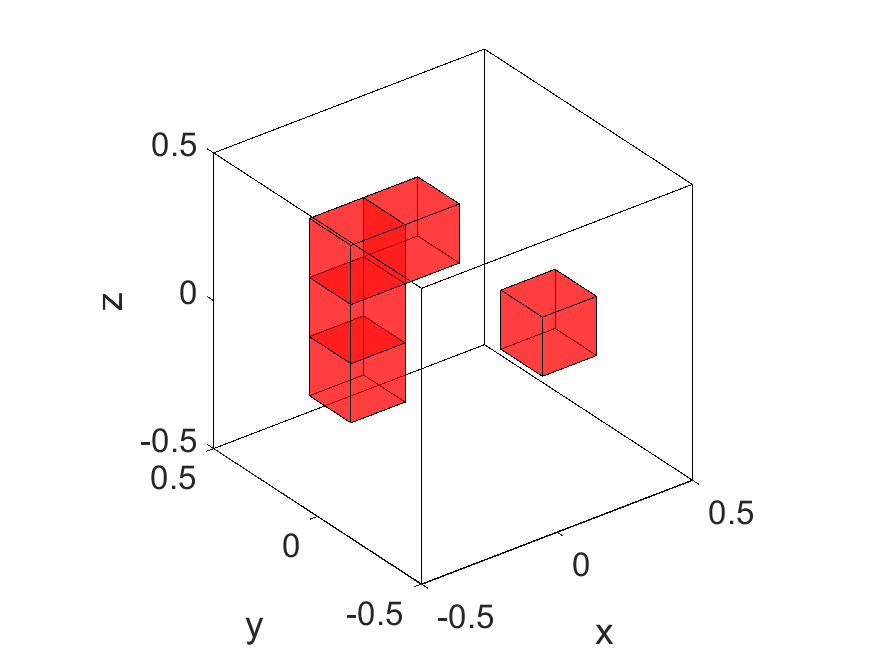}
\caption{Plot of the number of negative eigenvalues (left) and the reconstruction (right) for $\omega=50\,rad/s$ ($l_p=1.79\,m$, $l_s=0.18\,m$ for the homogenous background
material) for noise level $\eta=0.025$, $\delta=4.54\cdot 10^{-6}$ and $\tilde{M}_s=107$.}\label{standard_noise_0_025}
\end{figure}

\noindent
Summarizing the results concerning the standard tests we observe that we can reconstruct the inclusions for the prescribed noise levels $\eta$ with the chosen values of $\delta$ as given in the captions of Figure \ref{standard_noise_0} - Figure \ref{standard_noise_0_025}. 
\\
\\
Finally, we examine the relation of $\tilde{M}_s$, $\delta$ and the noise level $\eta$. As can be seen in the eigenvalue plots of the previous figures (Figure \ref{standard_noise_0} - Figure \ref{standard_noise_0_025}), there exists a distinct gap between the eigenvalues of blocks inside and outside of the inclusion for a noise level $\eta=\delta=0$. It is advantageous for the reconstruction to choose a large $\tilde{M}_s$ for the number of negative eigenvalues which still guarantees a correct reconstruction in the noiseless case. This is denoted as $\tilde{M}_s=M_{\max}$ in Figure \ref{M_delta}. Still, a correct reconstruction at noise level $\eta$ and the corresponding $\delta(\eta)$ is possible for all $M_{\min}\leq \tilde{M}_s\leq M_{\max}$. With increased noise, this gap closes fast, while the $\delta$ to be selected grows almost linearly in our test example up till a noise level of $\eta=3\%$. Here, a reconstruction is still possible, since $M_{\min}(\eta)=M_{\max}(\eta)$. For higher noise levels, a correct reconstruction is no longer possible. Hence, a fixed $\tilde{M}_s=107$ independent of $\eta$ leads to a correct reconstruction for all noise levels satisfying the conditions of Theorem \ref{theo_standard_noisy} or \ref{thm_Lin_inclusionDetection_noisy}.

\begin{figure}[H]
\centering 
\includegraphics[width=0.93\textwidth]{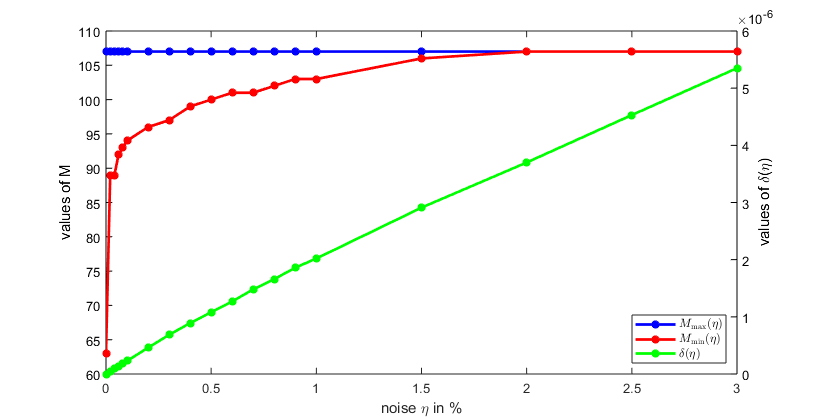}
\caption{Plot of the relation of the bounds $M_{\min}$ and $M_{\max}$ and $\delta$ for different noise levels $\eta$.}\label{M_delta}
\end{figure}

\subsection{Linearized monotonicity test}

Next, we take a look at the reconstructions based on the linearized monotonicity tests and consider the noise levels $\eta=0$ (see Figure \ref{linearized_noise_0}) and $\eta=0.004$ (see Figure \ref{linearized_noise_0_004}) for the frequency $\omega=50\,rad/s$.

\begin{figure}[H]
\centering 
\includegraphics[width=0.40\textwidth]{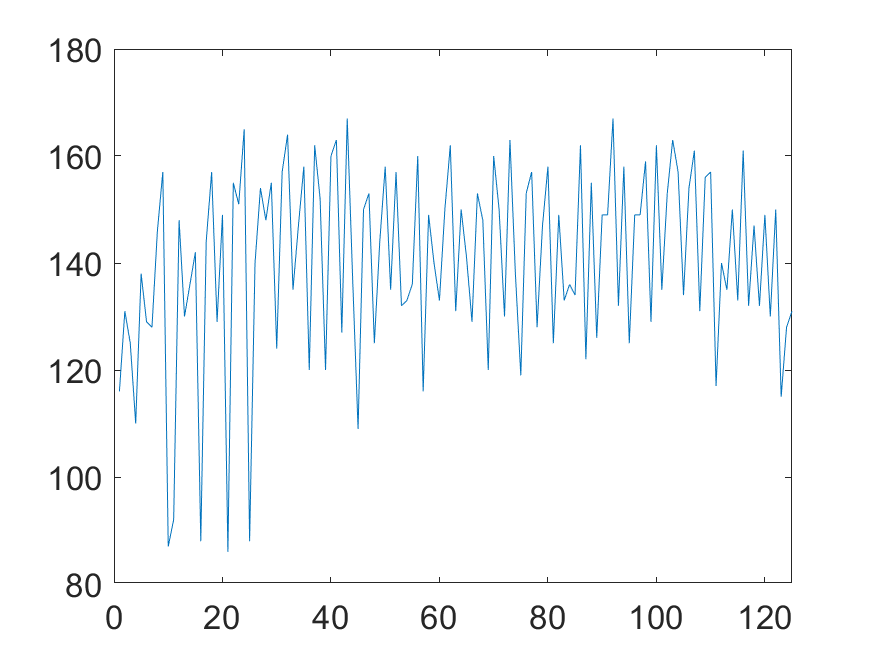}
\includegraphics[width=0.31\textwidth]{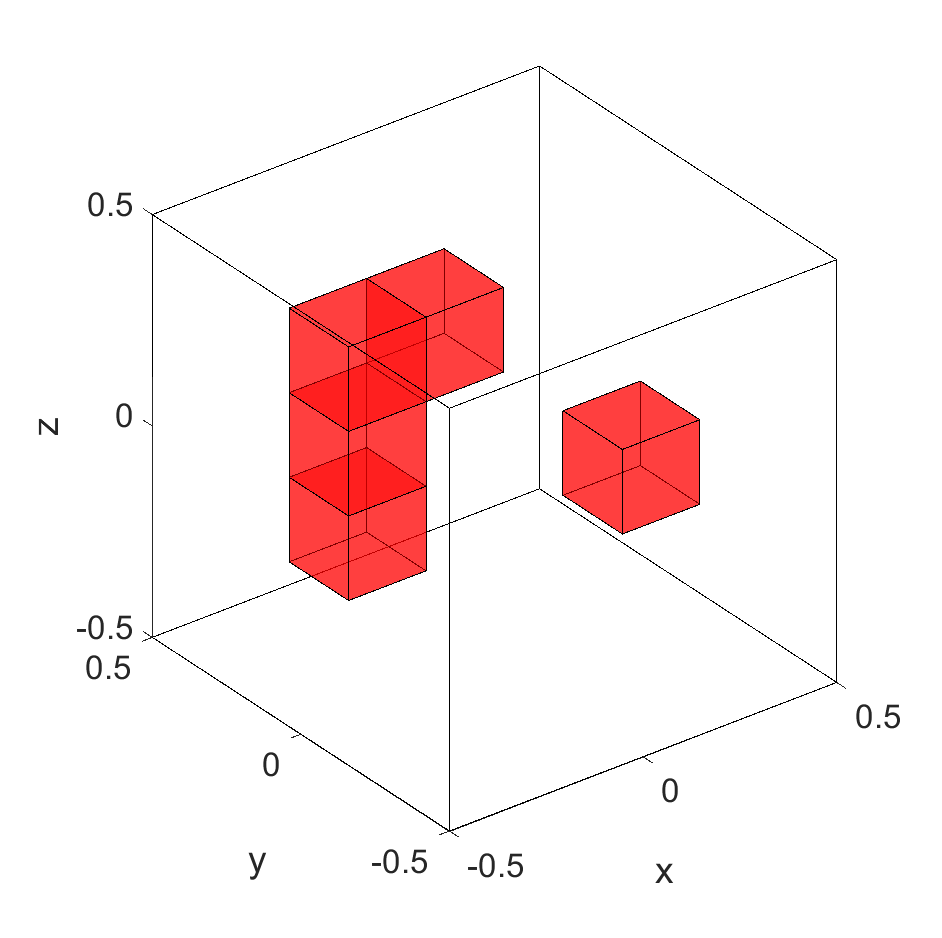}
\caption{Plot of the number of negative eigenvalues (left) and the reconstruction (right) for $\omega=50\,rad/s$ ($l_p=1.79\,m$, $l_s=0.18\,m$ for the homogenous background
material) for noise level $\eta=0$ and $\tilde{M}_l=108$.}\label{linearized_noise_0}
\end{figure}

\begin{figure}[H]
\centering 
\includegraphics[width=0.40\textwidth]{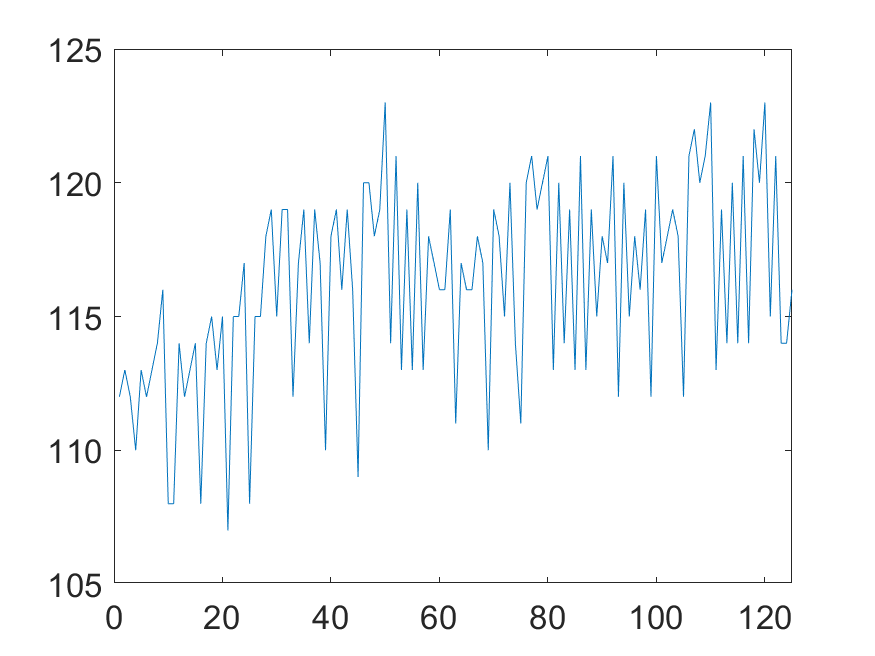}
\includegraphics[width=0.31\textwidth]{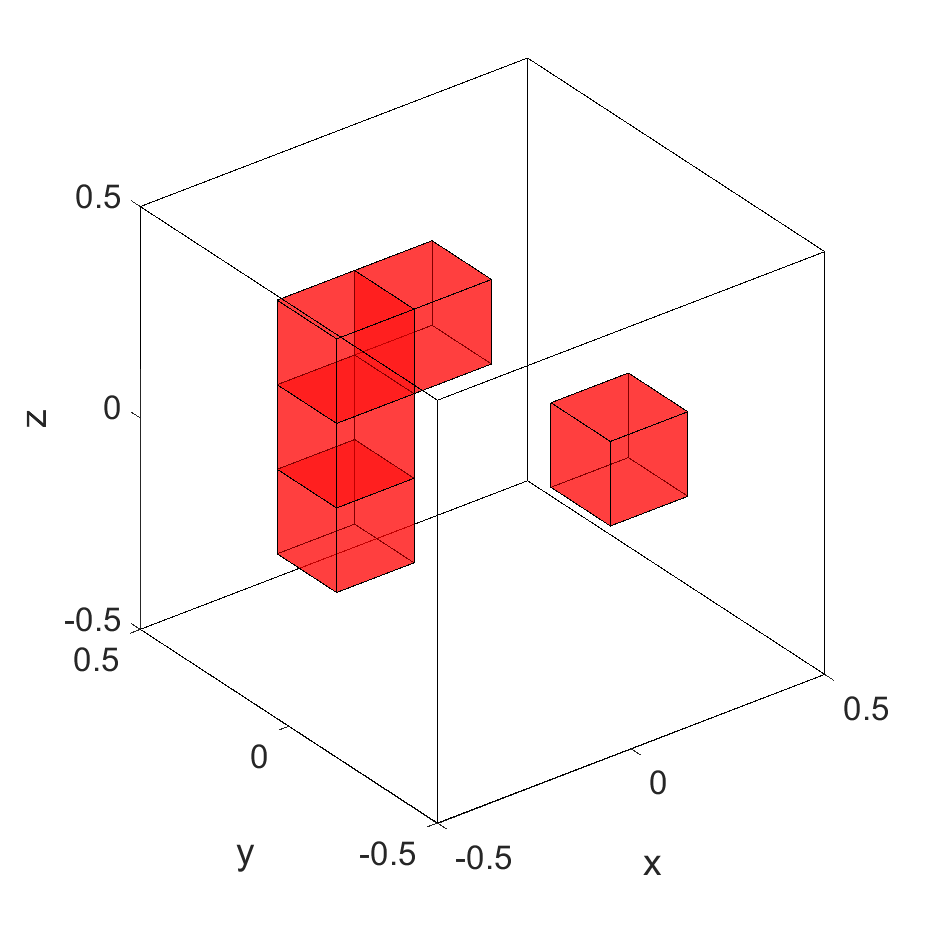}
\caption{Plot of the number of negative eigenvalues (left) and the reconstruction (right) for $\omega=50\,rad/s$ ($l_p=1.79\,m$, $l_s=0.18\,m$ for the homogenous background
material) for noise level $\eta=0.004$, $\delta=1.92\cdot 10^{-6}$ and $\tilde{M}_l=108$.}\label{linearized_noise_0_004}
\end{figure}
\noindent
Comparing the results of the standard monotonicity methods as shown in Figure \ref{standard_noise_0_005} ($\eta=0.005$) as well as Figure \ref{standard_noise_0_025} ($\eta=0.025$) with the one of the linearized monotonicity method in Figure \ref{linearized_noise_0_004} ($\eta=0.004$), we see that the standard tests are more robust w.r.t. noise. However, a reconstruction is still possible. The main advantage of the linearized tests is the extremely reduced computation time. 
\\
\\
Thus, we present a further test model, where we increase the number of boundary loads  to include tangential components ($1500$ boundary loads) and also increase the resolution by using $1000$ pixels compared to the $125$ in the previous figures. The Dirichlet boundary and the Neumann patches remain as before. A comparable calculation for the standard monotonicity tests is not feasible due to the computation time. 
\\
\\
It should be noted, that a correct reconstruction of the inclusion is not possible even in the noiseless case as can be seen in Figure \ref{linearized_noise_0_003_more_boundary_loads}. This is not surprising since those results were already observed in the stationary and oscillatory case in the paper \cite{EH21} and \cite{EP24}. However, a reconstruction for a comparable noise level as used in Figure \ref{linearized_noise_0_more_boundary_loads} still separates the calculated inclusions as well as the general shape and size. It is expected to further increase the resolution by either taking more boundary loads into account (which increases the computation time) or performing the reconstruction based on the simultaneous use of a finite set of distinct frequencies, which will be the subject of further research.

\begin{figure}[H]
\centering 
\includegraphics[width=0.39\textwidth]{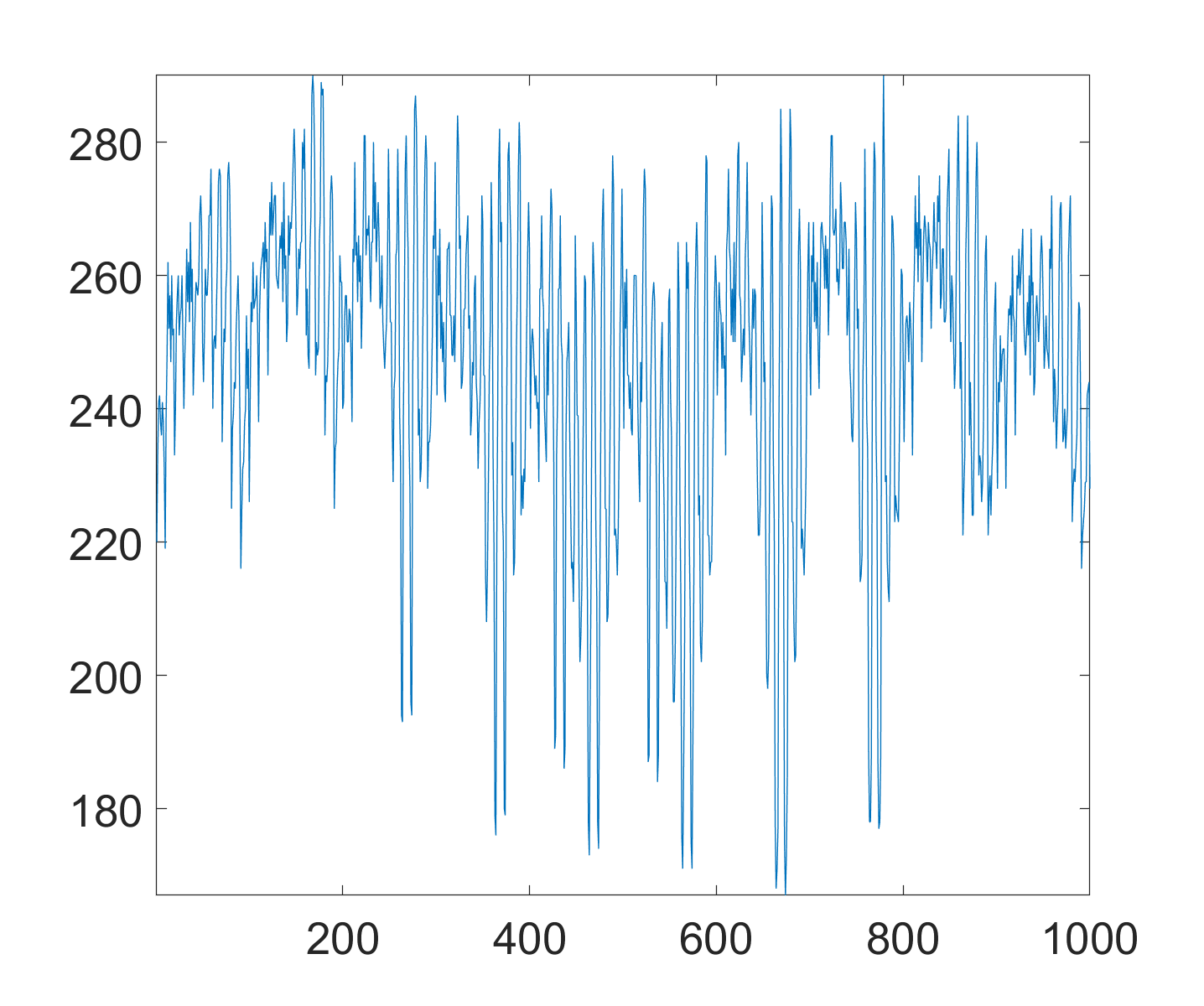}
\includegraphics[width=0.34\textwidth]{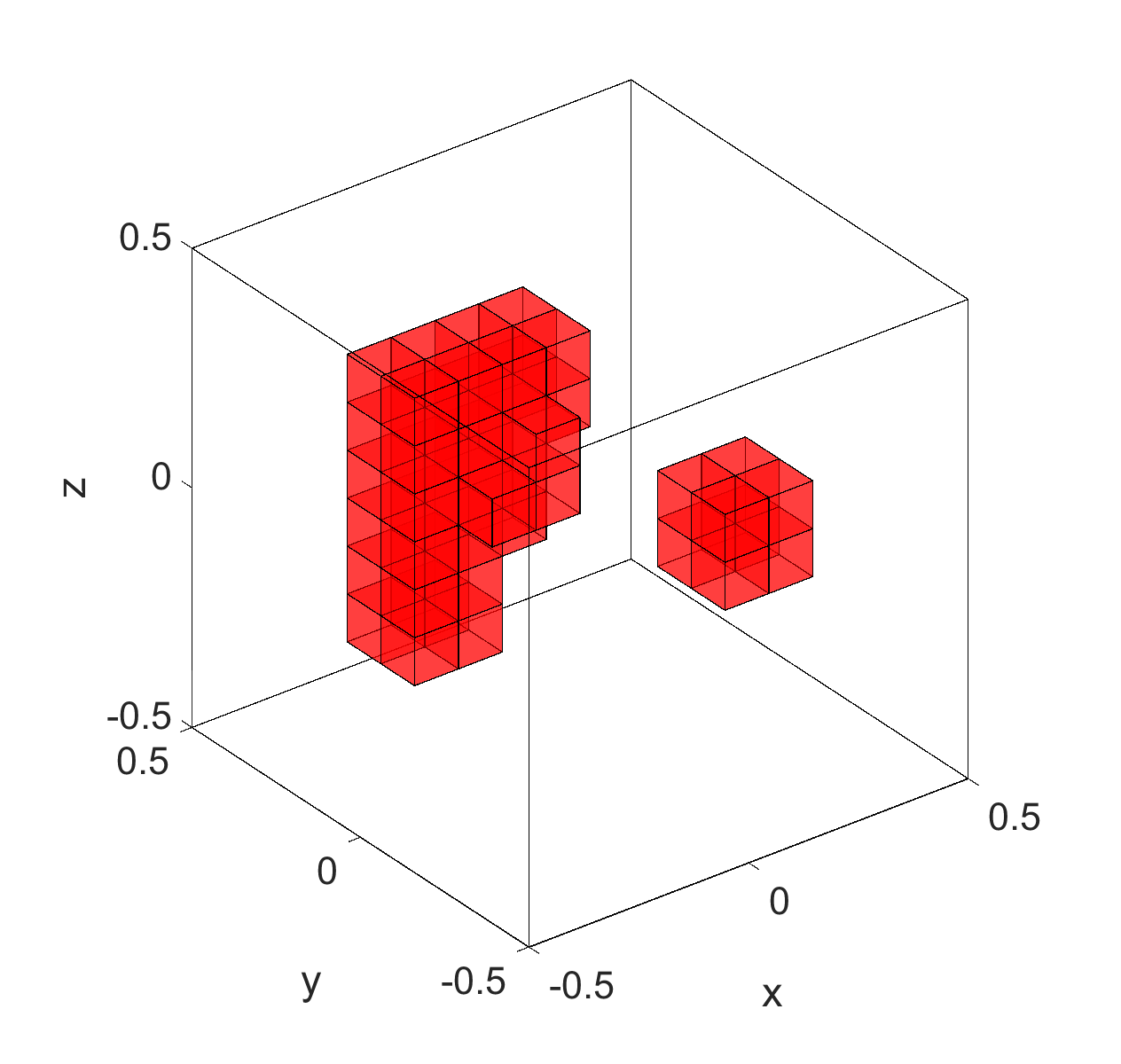}
\caption{Plot of the number of negative eigenvalues (left) and the reconstruction (right) for $\omega=50\,rad/s$ ($l_p=1.79\,m$, $l_s=0.18\,m$ for the homogenous background
material) without noise $\eta=0$ and $\tilde{M}_l=198$.}\label{linearized_noise_0_more_boundary_loads}
\end{figure}

\begin{figure}[H]
\centering 
\includegraphics[width=0.39\textwidth]{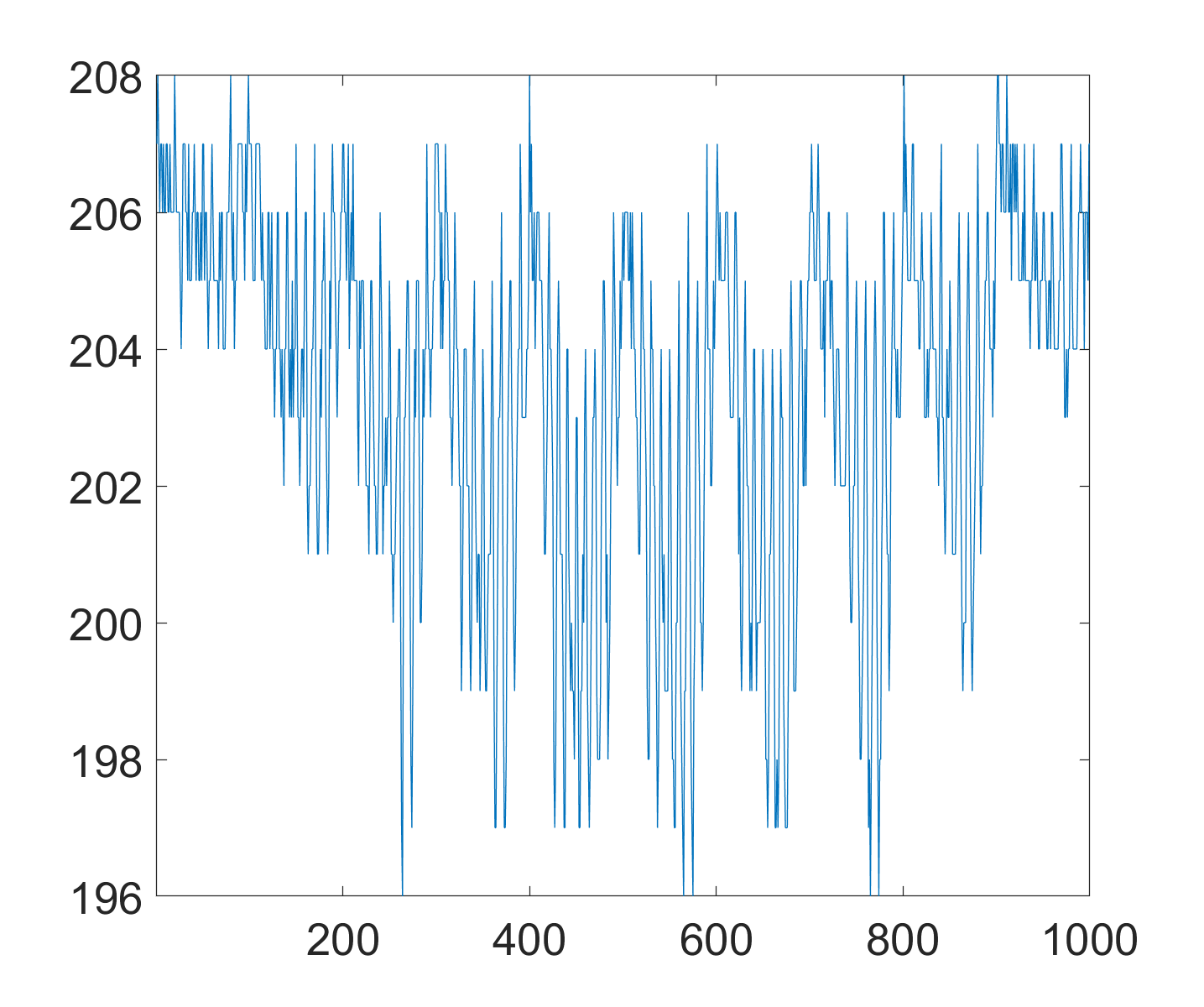}
\includegraphics[width=0.32\textwidth]{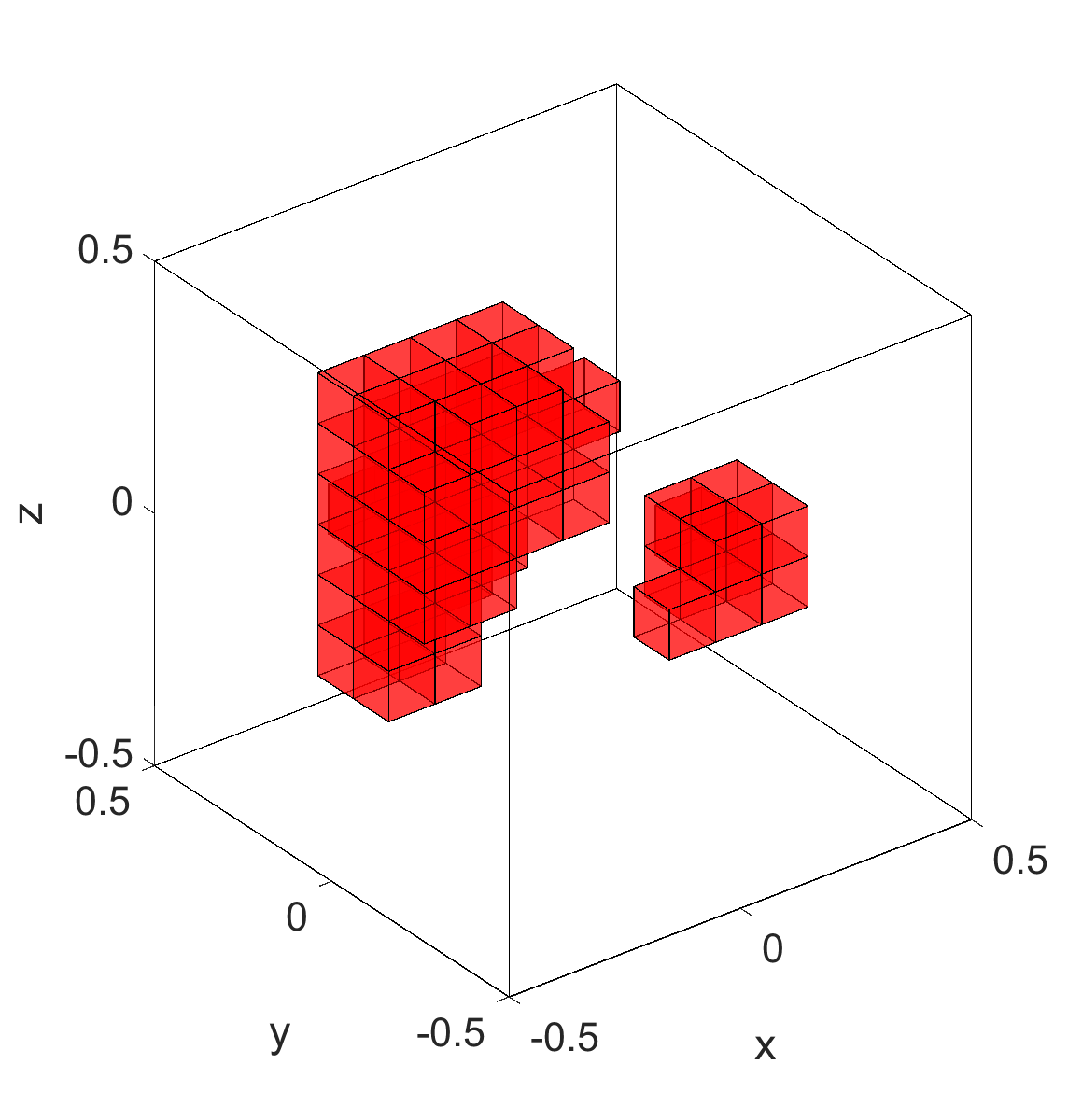}
\caption{Plot of the number of negative eigenvalues (left) and the reconstruction (right) for $\omega=50\,rad/s$ ($l_p=1.79\,m$, $l_s=0.18\,m$ for the homogenous background
material) for noise level $\eta=0.003$, $\delta=6.55\cdot 10^{-6}$ and $\tilde{M}_l=198$.}\label{linearized_noise_0_003_more_boundary_loads}
\end{figure}

\section{Conclusion and Outlook}
\noindent
We showed that the standard as well as linearized monotonicity methods for the time harmonic wave equation can handle noisy data. This is important for simulations based on real world data, e.g., laboratory data. As a future step we will apply our monotonicity methods to laboratory data. Further on, we aim to extend our methods via a monotonicity-based regularization similarly to \cite{EH22}.
\\
\\
\noindent
{\bf{Statements and Declarations}}
\\
The author thanks the German Research Foundation (DFG) for funding the project "Inclusion Reconstruction with Monotonicity-based Methods for the Elasto-oscillatory Wave Equation" (reference number 499303971).

\noindent
\\
\\

\begin{thebibliography}{+}

%
%
%
%

\bibitem[ABGKLW15]{ABGKLW15} 
H. Ammari, E. Bretin, J. Garnier, H. Kang, H. Lee, A. Wahab, Mathematical methods in elasticity imaging, Princeton University Press, 2015.\\

\bibitem[BBIM04]{BBIM04} 
G. Bal, C. Bellis, S. Imperiale, R. Monard, Reconstruction of constitutive parameters in isotropic linear elasticity from noisy full-field measurements, Inverse Problems, 30(12): 125004, 2004.\\


\bibitem[AB19]{AB19} 
W. Aquino, M. Bonnet, Analysis of the error in constitutive equation approach for time-harmonic elasticity imaging, SIAM Journal on Applied Mathematics, 79(3): 822-849, 2019.\\


\bibitem[B18]{B18} 
M. Baumann, Fast iterative solution of the time-harmonic elastic wave equation at multiple frequencies, PhD-Thesis, Delft University of Technology, 2018.\\

\bibitem[BYZ19]{BYZ19} 
G. Bao, T. Yin, F. Zeng,
Multifrequency Iterative Methods for the Inverse Scattering Medium Problems in Elasticity,
SIAM Journal on Scientific Computing, 41(4): B721–B745, 2019. \\


\bibitem[BG04]{BG04} 
E. Barbone, N.H. Gokhale,
Elastic modulus imaging: on the uniqueness and nonuniqueness of the elastography inverse problem in two dimensions, Inverse Problems, 20(1), 203-296, 2004.\\


\bibitem[BHFVZ17]{BHFVZ17} 
E. Beretta, M. V de Hoop, E. Francini, S. Vessella, J. Zhai,
Uniqueness and Lipschitz stability of an inverse boundary value problem for time-harmonic elastic waves,
Inverse Problems, 33, 035013, 2017. \\

\bibitem[BHQ13]{BHQ13} 
E. Beretta, M. de Hoop M, L. Qiu, Lipschitz stability of an inverse boundary value problem for time-harmonic elastic
waves, part I: recovery of the density,
Proceedings of the Project Review, Geo-Mathematical Imaging Group
(Purdue University, West Lafayette IN), 1, 263-272, 2013. \\

%
%
%

\bibitem[CKAGK06]{CKAGK06} 
A. Charalambopoulos, A. Kirsch , K. A. Anagnostopoulos, D. Gintides, K. Kiriaki, The factorization method in inverse elastic scattering from penetrable bodies, Inverse Problems, 23(1), 2006.\\


\bibitem[Ci88]{Ci88} 
P. Ciarlet, Mathematical Elasticity, Volume: Three-dimensional Elasticity,
North-Holland, Amsterdam, 1988.\\


\bibitem[EH19]{EH19} 
J. Elschner, G. Hu, Uniqueness and factorization method for inverse elastic scattering with a single incoming wave, Inverse Problems, 35(9): 094002, 2019.\\

\bibitem[EH21]{EH21} 
S. Eberle, B. Harrach, 
Shape reconstruction in linear elasticity: Standard and linearized monotonicity method,
Inverse Problems, 37(4):045006, 2021. \\


\bibitem[EHMR21]{EHMR21} S. Eberle, B. Harrach, H. Meftahi, and T. Rezgui, 
Lipschitz Stability Estimate and Reconstruction of Lamé Parameters in Linear  Elasticity, 
Inverse Probl. Sci. Eng., 29 (3), 396-417, 2021.\\


\bibitem[EM21]{EM21} S. Eberle, J. Moll, 
Experimental Detection and Shape Reconstruction of Inclusions in Elastic Bodies via a Monotonicity Method, 
Int. J. Solids Struct., 233, 111169, 2021.\\

\bibitem[EH22]{EH22} 
S. Eberle, B. Harrach, 
Monotonicity-Based Regularization for Shape Reconstruction in Linear Elasticity, 
Comput. Mech.,  69, 1069-1086, 2022\\

\bibitem[EH23]{EH23} 
S. Eberle-Blick, B. Harrach,
Resolution Guarantees for the Reconstruction of Inclusions in Linear Elasticity Based on Monotonicity Methods, 
Inverse Problems, 39, 075006, 2023.\\

\bibitem[EH23a]{EH23a} S. Eberle-Blick, N. Hyvönen, 
Bayesian Experimental Design for Linear Elasticity, 
Inverse Problems and Imaging, 18 (6), 1294-1319, 2024.\\

\bibitem[EP24]{EP24}  
S. Eberle-Blick, V. Pohjola, 
The monotonicity method for inclusion detection and the time harmonic elastic wave equation,
Inverse Problems, 40, 045018, 2024.\\

\bibitem[EP24a]{EP24a} 
S. Eberle-Blick, V. Pohjola, The Linearized Monotonicity Method for Elastic Waves and the Separation of Material Parameters,
arXiv Preprint: arXiv:2409.20339, 2024.\\

%
%
%
%
%
%
%
%
%

\bibitem[HH93]{HH93} 
P. Hahner, G. C. Hsiao, Uniqueness theorems in inverse obstacle scattering of elastic waves, Inverse Problems, 9(6): 667-678, 1993.\\

\bibitem[H93]{H93} 
P. Hähner, A uniqueness theorem in inverse scattering of elastic waves, IMA Journal of Applied Mathematics, 51: 201-215, 1993.\\


\bibitem[HPS19a]{HPS19a} 
B. Harrach, V. Pohjola, M. Salo,
Dimension bounds in monotonicity methods for the Helmholtz equation,
\textit{SIAM J. Math. Anal.} 51-4, pp. 2995-3019, 2019. \\

\bibitem[HPS19b]{HPS19b} 
B. Harrach, V. Pohjola, M. Salo,
Monotonicity and local uniqueness for the Helmholtz equation in a bounded domain,
Anal. PDE, Vol. 12, No. 7, 1741--1771, 2019. \\


\bibitem[HU13]{HU13} 
B. Harrach, M. Ullrich,
Monotonicity-based shape reconstruction in electrical impedance tomography,
SIAM J. Math. Anal. Vol. 45, No. 6, pp. 3382–3403, 2013.\\

%

\bibitem[HM16]{HM16} 
B. Harrach, N. M. Mach, Enhancing residual-based techniques with shape reconstruction features in electrical impedance tomography, Inverse Problems, 32(12): 125002, 2016.\\

\bibitem[HKS12]{HKS12} 
G. Hu, A. Kirsch, M. Sini, Some inverse problems arising from elastic scattering by rigid obstacles, Inverse Problems, 29(1): 015009, 2012.\\


\bibitem[HLZ13]{HLZ13} 
G. Hu, Y. Lu, B. Zhang,
The factorization method for inverse elastic scattering from periodic structures, Inverse Proble
ms, 29(11): 115005, 2013. \\


\bibitem[HMSY20]{HMSY20} 
G. Hu, A. Mantile, M. Sini, T. Yin, Direct and inverse time-harmonic elastic scattering from point-like and extended obstacles, Inverse Problems $\&$ Imaging, 2020.\\


\bibitem[JL19]{JL19} 
X. Ji, X. Liu, Inverse elastic scattering problems with phaseless far field data, Inverse Problems,35(11): 114004, 2019.\\

\bibitem[JMRY03]{JMRY03} 
L. Ji, J. R. McLaughlin, D. Renzi, J.-R. Yoon, Interior elastodynamics inverse problems: shear wave speed reconstruction in transient elastography, Inverse Problems, 19(6): S1-S29, 2003.\\

\bibitem[LWWZ16]{LWWZ16} 
P. Li, Y. Wang, Z. Wang, Y. Zhao, Inverse obstacle scattering for elastic waves, Inverse Problems, 32(11): 115018, 2006.\\


\bibitem[LLS19]{LLS19} 
J. Liu J, X. Liu, J. Sun, Extended sampling method for inverse elastic scattering problems using one incident wave, SIAM Journal on Imaging Sciences, 12(2): 874-892, 2019.\\

%
%
%
%
%
%
%
%
%
%
%
%
%

\bibitem[SFHC14]{SFHC14} 
J. Shi, F. Faucher , M. de Hoop, H. Calandra,  Multi-level elastic full waveform inversion in isotropic media via
quantitative Lipschitz stability estimates. Proceedings of the Project Review, Geo-Mathematical Imaging Group,
Chicago, United States. 1:1-34, 2014. \\

\bibitem[TR02]{TR02} 
A. Tamburrino and G. Rubinacci,
A new non-iterative inversion method for electrical resistance tomography,
Inverse Problems, 6, 18,  1809--1829, 2002. \\

\bibitem[Ta06]{Ta06} 
A. Tamburrino, Monotonicity based imaging methods for elliptic and parabolic
inverse problems.
Journal of Inverse and Ill-posed Problems, 14(6):633–642, 2006. \\


\bibitem[WACSMCCFSZ03]{WACSMCCFSZ03} 
A. B. Weglein, V. A. Araújo, P. M. Carvalho, R. H. Stolt, K. H. Matson, R. T. Coates, D. Corrigan, D. J. Foster, S. A. Shaw, H. Zhang. 
Inverse scattering series and seismic exploration, Inverse Problems, 19: R27-R83, 2003.



\end{thebibliography}
\end{document}